\title
\newtheorem{theorem}{Theorem}[subsection]
\newtheorem{example}[theorem]{Example}
\newtheorem{lemma}[theorem]{Lemma}
\newtheorem{conjecture}[theorem]{Conjecture}
\newtheorem{proposition}[theorem]{Proposition}
\newtheorem{remark}[theorem]{Remark}
\date{\today}
\subjclass{Primary 55P10; Secondary 55M30 }
\keywords{LS-category, (Higher) Topological complexity, James generalized Hopf invariants, Zero-divisor-cup-length.}
\begin{document}
\author{A. Boudjaj and Y. Rami}{\let\thefootnote\relax\footnote{{\it Address}: My Ismail University of Meknes, Department of Mathematics  B. P. 11 201 Zitoune,  Meknès, Morocco}\let\thefootnote\relax\footnote{{\it Emails}: a.boudjaj@edu.umi.ac.ma  and y.rami@umi.ac.ma}}

\renewcommand{\abstractname}{Abstract}

% % % % % % % % % % % % % % % % % % % % % % % % % % % % % % % % % % % % % % % % % % % % % % % % % % % % % % % % % % % % % % % % % % % % % % % % % % % % % % % % % % % % % % % % % % % % % % % % % % % % % % % % % % % % % % % % % % % % % % % % % % % % % % % % % % % % % % % % % % % % % % % % % % % % % % % %

\maketitle

\begin{abstract}
In this paper, we establish the rationality conjecture raised in \cite{FKS} for   any $(r-1)$-connected ($r\geq 2$) $kr$-dimensional  CW-complex  $X$ ($k\geq 2$) having a unique spherical cohomology class $u\in \tilde{H}^r(X, \mathbb{Z})$ such that $u^k\not =0$.
%which is nilpotent with order of nilpotency equal to $k+1$. 
Next, we illustrate (topologically) our result by giving the minimal cell structure of such a CW-complex whose cohomology is a truncated polynomial algebra.
\end{abstract}

\section{Introduction and statement of the results}
Throughout this paper, unless stated otherwise, $X$ will denote  an  $(r-1)$-connected finite CW-complex $X$ ($r\geq 2$) and $\mathbb{K}$ an arbitrary  field.

Very recently, M. Farber et al. have posed in \cite{FKS}, the following rationality conjecture  
\begin{conjecture}\label{ratconj}
	The $TC$-generating function $\mathcal{F}_X(x)=\sum_{n=1}^{n=\infty}TC_{n+1}(X)x^n$ is a rational function  with a single pole of order 2 at $x = 1$. More explicitly, there exists $P\in \mathbb{Z}[X]$ such that: $$\mathcal{F}_X(x):=\sum_{n=1}^{n=\infty}TC_{n+1}(X)x^n=\frac{P(x)}{(1-x)^2}.$$
\end{conjecture}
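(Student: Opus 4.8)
The plan is to reduce Conjecture~\ref{ratconj} for our $X$ to the single equality
\[
TC_{n+1}(X)=(n+1)k\qquad\text{for every }n\ge 1,
\]
and then to sum the series directly. Granting this equality,
\[
\mathcal F_X(x)=\sum_{n=1}^{\infty}(n+1)k\,x^n=k\Bigl(\frac{1}{(1-x)^2}-1\Bigr)=\frac{k(2x-x^2)}{(1-x)^2},
\]
so the conjecture holds with $P(x)=2kx-kx^2\in\mathbb Z[x]$, and since $P(1)=k\ne 0$ this exhibits a single pole of order exactly $2$ at $x=1$. It is worth observing that the conclusion is robust: if one only knew $TC_{n+1}(X)=(n+1)k+\varepsilon_n$ for a bounded, eventually constant sequence $(\varepsilon_n)$, the extra term would contribute a rational summand with at worst a simple pole at $x=1$, and the final form would be unchanged. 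So the whole problem reduces to two matching estimates.

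For the upper bound I would argue formally. For any path-connected space $Y$ the invariant $TC_m(Y)$ is the sectional category of a fibration over $Y^m$, hence $TC_m(Y)\le\mathrm{cat}(Y^m)$ since the sectional category of a fibration is bounded above by the LS-category of its (path-connected) base; and $\mathrm{cat}(Y^m)\le m\,\mathrm{cat}(Y)$ by subadditivity of LS-category on finite products. Taking $Y=X$ and $m=n+1$, it remains to compute $\mathrm{cat}(X)$. The inequality $\mathrm{cat}(X)\le k$ is the dimension--connectivity estimate $\mathrm{cat}(X)\le\dim X/(\mathrm{conn}(X)+1)=kr/r=k$, valid because $X$ is an $(r-1)$-connected CW-complex of dimension $kr$; the reverse $\mathrm{cat}(X)\ge k$ is the cup-length bound, since $u^k\ne 0$. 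Hence $TC_{n+1}(X)\le(n+1)k$.

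For the lower bound I would use the zero-divisor cup-length, which satisfies $\mathrm{zcl}_{n+1}(X)\le TC_{n+1}(X)$. Write $u_i=1\otimes\cdots\otimes u\otimes\cdots\otimes1\in H^{*}(X^{n+1})$ with $u$ in the $i$-th tensor slot, so that each $\bar u_i:=u_i-u_1$ lies in the kernel of the $(n+1)$-fold cup product $H^{*}(X^{n+1})\to H^{*}(X)$. Since $\dim X=kr$ forces $u^{k+1}=0$, and (over $\mathbb Q$) $u$ must have even degree, the binomial expansion collapses to $\bar u_i^{\,2k}=(-1)^k\binom{2k}{k}\,u_1^{k}u_i^{k}\ne 0$. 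Expanding the product $\bar u_2^{\,2k}\,\bar u_3^{\,k}\cdots\bar u_{n+1}^{\,k}$ and discarding every monomial divisible by $u_1^{k+1}$ reduces it to a nonzero scalar multiple of $u_1^{k}u_2^{k}\cdots u_{n+1}^{k}=(u^{k})^{\otimes(n+1)}\ne 0$; this product is built from $2k+(n-1)k=(n+1)k$ zero-divisor factors, so $\mathrm{zcl}_{n+1}(X)\ge(n+1)k$ and therefore $TC_{n+1}(X)\ge(n+1)k$. (The same count is forced from above: $X^{n+1}$ is $(r-1)$-connected of dimension $(n+1)kr$, so any product of more than $(n+1)k$ positive-degree classes vanishes for degree reasons.) Combining the two estimates gives $TC_{n+1}(X)=(n+1)k$ for all $n\ge1$, and the computation of $\mathcal F_X$ above finishes the proof.

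The step I expect to be the genuine obstacle is making the cohomological lower bound unconditional. One needs a coefficient field over which $u^k$ stays nonzero \emph{and} $\binom{2k}{k}$ is invertible; over $\mathbb Q$ both hold once $u^k\ne 0$ rationally, which is also what forces $r$ to be even and legitimises the even-degree binomial computation. If instead $u^k$ survived only as torsion one would have to work modulo a prime $p$, where the parity discussion changes and the collapse of $\bar u_i^{\,2k}$ must be re-examined. I expect it is here that the hypotheses that $u$ be \emph{spherical} and the \emph{unique} spherical class enter, through the James generalized Hopf invariants, which are the natural device for pinning down the attaching data controlling these products; the same machinery should then be reused in the second half of the paper to produce the explicit minimal CW-structure on an $X$ whose cohomology is a truncated polynomial algebra.
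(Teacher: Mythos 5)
Your overall strategy is the same as the paper's: prove the exact values $TC_{n+1}(X)=(n+1)k$ by sandwiching, with the upper bound from the dimension--connectivity estimate $TC_{n+1}(X)\le (n+1)\dim X/r=(n+1)k$ and the lower bound from a nonvanishing product of $(n+1)k$ zero divisors, and then the generating function is trivially $k(2x-x^2)/(1-x)^2$ (the paper packages this last step as a citation of \cite[Theorem 1]{FKS}). Where you genuinely differ is the choice of zero-divisor product: your $\bar u_2^{\,2k}\bar u_3^{\,k}\cdots\bar u_{n+1}^{\,k}$ collapses to $\pm\binom{2k}{k}\,u^k\otimes\cdots\otimes u^k$, whereas the paper uses $\xi_{(n,k)}$ and $\mu_{(n,k)}$, whose surviving coefficient is $\lambda_{(3,k)}=\sum_i(-1)^i(C_k^i)^3$; since $\lambda_{(3,k)}=0$ for odd $k$, the paper is forced into a separate odd-$k$ argument with $\mu_{(n,k)}$, while your coefficient $\binom{2k}{k}$ never vanishes and removes the parity split entirely. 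That is a real simplification of Lemma \ref{Lem2} and of the set of excluded characteristics in Proposition \ref{Prop1}.

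However, there is a genuine gap, and it is exactly the one you flag and then leave open. Condition (1) only asserts $u^k\ne 0$ in $H^{kr}(X,\mathbb{Z})$; the top integral cohomology of a $kr$-dimensional complex may contain torsion, and $u^k$ may have finite additive order (the paper treats this explicitly: case (ii) of \S 2, part (b) of Theorem \ref{Thm3}, and the complexes $\tilde K$ in \S 5). In that situation $u^k_{\mathbb{Q}}=0$, so your rational computation produces no lower bound at all, and one must instead exhibit a field of prime characteristic in which $u^k_{\mathbb{K}}\ne 0$ \emph{and} your coefficient $\binom{2k}{k}$ is invertible; the admissible characteristics are constrained by the torsion structure (the paper's sets $\mathcal{P}(i)$, $\mathcal{P}(ii)$ from Lemma \ref{Lem1}), and the existence of a good prime is precisely what the proofs of Proposition \ref{Prop1} and Theorem \ref{Thm1} argue, via the \S 2 analysis and the appeal to Dirichlet. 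A related point you wave at but do not settle: if $r$ is odd then $2u^2=0$ integrally, so the even-degree binomial collapse you use is unavailable outside characteristic $2$. Finally, your guess about where sphericity and the James invariants enter is off target: in the paper the sphericity of $u$ is an automatic consequence of (1) via Hurewicz and universal coefficients, and the generalized Hopf invariants appear only in \S\S 4--5 (minimal cell structures and examples), not in the proof of the conjecture itself. As written, your argument proves the statement only when $o(u^k)$ is infinite.
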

%  coefficients of $\mathcal{F}_X(x)$ satisfy a recurrent equation of the form 
 %(some integer $a$).
%This makes it possible to find a closed formula for the sequence of higher topological complexities (cf. \cite[Lemma 1]{FKS}) and possibly to find relationships between such sequences corresponding to  different spaces verifying it. 

To make precise terms of $\mathcal{F}_X(x)$, recall that the sectional category (or Schwartz's genus \cite{Sch}) of a fibration  $F\rightarrow E\stackrel{p}{\rightarrow} B$ is the minimal integer $k \geq  0$  denoted $secat(p)$ such that there exists an open cover $B = U_0\cup  U_1 \cup \ldots \cup U_k$ with the property that over each set $U_i$ the fibration admits a continuous section.  $secat(-)$ is indeed  an homotopy invariant. Special cases useful for us are the (higher) topological complexities 
 %$TC(X)=secat(\pi_2)$  and its higher analogous
  $TC_n(X)=secat(\pi_n)$ ($n\geq 2$) introduced firstly by M. Farber  in \cite{Fa03} for $n=2$ and generalized for any $n\geq 2$ by  Y. Rudyak in \cite{Ru} (see also \cite{BGRT}). Here
   $$\begin{array}{cccc}
      \pi_n : &  P(X) &  \rightarrow & X^n\\
              &    \gamma & \mapsto   & (\gamma(0), \gamma(\frac{1}{n}), \ldots , \gamma(\frac{n-1}{n}), \gamma(1)) 
              \end{array}                    
      $$ 
        is the  fibrational substitute for the iterated diagonal map $\Delta_n : X \rightarrow X^n$. In fact, $\pi_n$ describes a motion planning algorithm of a physical system moving in its configuration space $X$ between any two positions (the input and the output) but having to  reach $n-2$ intermediate stats.
        Referring to  \cite[Lemma 1]{FKS}, if $X$ satisfies (\ref{ratconj}) then for all $n$ large enough $TC_{n+1}(X) - TC_{n}(X)$ becomes  constant. 
        %Thus, beyond some deterministic integer $n_0$ 
        % This means that the topological complexity caused by the requirement for the physical system to reach more than a some fixed number of intermediate positions grows as a homothety.
The invariants $TC_n(X)$ are usually difficult to determine and are often approximated by  
%the Lusternick-Schnirelmann category $cat_{LS}(X) = secat(ev_1)$
%where $ev_1 : P_0(X)\rightarrow X$ is the evaluation map from $P_0(X)$, the subspace of all paths issued from a based point $x_0\in X$, given by $ev_1(\alpha)=\alpha (1)$.
the  algebraic invariants \textit{cup-length} and \textit{zero-divisors-cup-length} defined respectively as follows:
$$
 cup_{\mathbb{K}}(X) = sup\{j\geq 1,\; \exists a_1, a_2, \ldots , a_j\in \tilde{H}^ *(X, \mathbb{K})\; \hbox{such that} \;  a_1 a_2\ldots  a_j\not =0\}
$$ 
and
$$zcl_n(X, \mathbb{K})= sup\{j\geq 1,\; \exists a_1,\; \ldots ,\; a_j\in Ker(\smile _{n, \mathbb{K}})\; \hbox{such that} \;  a_1 \ldots  a_j\not =0\}$$
where $\smile_{n, \mathbb{K}} : H^* (X, \mathbb{K})^{n \otimes }\rightarrow H^* (X, \mathbb{K})$
denotes the graded algebra multiplication  generalizing the standard cup-product $u\smile 1_{\mathbb{K}}=\smile_{2, \mathbb{K}}$. These will be subsequently used.

% % % % % % % % % % % % % % % % % % % % % % % % % % % % % % % % % % % % % % % % % % % % % % % % % % % % % % % % % %
% % % % % % % % % % % % % % % % % % % % % % % % % % % % % % % % % % % % % % % % % % % % %

%The following theorem will be subsequently used:
%Referring to  and \cite{BGRT}, we recall for our purposes the following  
%\begin{theorem} \cite{CLOT, BGRT}: Let $X$ an $s$-connected CW-complex.
%	For any field $\mathbb{K}$ and any integer $n\geq 2$,  we have
%	$$cup_{\mathbb{K}}(X)\leq cat_{LS}(X)\leq \frac{\dim X}{s+1}\; \hbox{and}\; zcl_n(X, {\mathbb{K}}) \leq T %C_n(X)\leq \frac{n\dim X}{s+1}$$
%	where $cat_{LS}(X)$ stands for Lusternick-Schnirelmann category of $X$.
%\end{theorem}  
% % % % % % % % % % % % % % % % % % % % % % % % % % % % % % % % % % % % % % % % % % % % % % % % % % % % % % % % % % % % % % % % % % % % % % % % % % % % % % % % % % % % % % % % % % % % % % % % % % % % % % % % % % % % % % % % % % % % %
Our goal in this paper is to study the conjecture (\ref{ratconj}) for any $(r-1)$-connected finite   CW-complex ($r\geq 2$) satisfying the following condition:
% % % % % % % % % % % % % % % % % % % % % % % % % % % % % % % % % % % % % % % % % % % % % % % % % % % % % % % % % % % % % % % % % % % % % % % % % % % % % % % % % % % % % % % % % % % % % % % % % % % % % % % % % % 
% family of finite CW-complexes each of which has  cohomology of a truncated polynomial algebra on a single generator over  coefficients fields in some specific class. 
%Since $X$ is simply connected, $\tilde{H}^*(X, \mathbb{Z}) = \oplus_{n\geq r}\tilde{H}^r(X, \mathbb{Z})$; some  $r\geq 2$. According to \cite[Corollaries 3A.6 and 3.3]{Ha} and  since $X$ is of finite type, to achieve our goal, we have to assume that $\tilde{H}^r(X, \mathbb{Z})\cong\mathbb{Z}u$ with $u$ of order $r$. Moreover, if $u$ is not nilpotent, that is $u^m\not = 0$ for every $m\geq 2$ then ${H}^*(X, \mathbb{K})\supseteq \mathbb{K}[u_{\mathbb{K}}]$, where $u_{\mathbb{K}}:=u\smile 1_{\mathbb{K}}$, for every $\mathbb{K}$; except if some power $u^{l+1}$ has  finite additive order (cf. \S 2). Therefore, without loss of generality, we will assume the following condition:
% or $\mathbb{Z}/p^t\mathbb{Z}$; some $t\geq 1$ and $p$ a prim number.
%To this end, we assume that, for each fixed couple $(r,k)$ of integers such that $r, k\geq 2$,
\begin{equation}\label{xi}
{\it \dim X=kr, \; \hbox{and}\; \tilde{H}^r(X, \mathbb{Z})\cong\mathbb{Z}u,\;  \hbox{with}\; u^k\not =0\; \hbox{some} \; k\geq 2.}
%\left\{\begin{array}{c}
%u^k\not =0\\
%and \\
%u^{k+1}=0
%\end{array}\right.
\end{equation}         
%and 
% where $Free\tilde{H}^{r}(X, \mathbb{Z})$ stands for the torsion-free subgroup  of $\tilde{H}^{r}(X, \mathbb{Z})$.
%This class $u$ will be called  an \textit{$r$-specific cohomology class} (cf. also \cite{BR16}). 
Using the Universal Coefficients Theorem and the Hurewicz Theorem we obtain 
$\tilde{H}^r(X, \mathbb{Z}) \cong Hom_{\mathbb{Z}}(\tilde{H}_r(X, \mathbb{Z}), \mathbb{Z})\stackrel{{hur^{\vee}_X}{\cong}}{\longleftarrow} Hom_{\mathbb{Z}}(\pi_r(X), \mathbb{Z}).$
It results that, if $a\in \tilde{H}_r(\mathbb{S}^r, \mathbb{Z})$ and $[f]\in \pi_r(X)\cong \mathbb{Z}$ are generating  homology and homotopy classes respectively, then,  $<f^*(u),a>=<u,f_*(a)>\not =0$ and consequently, $f^*(u)\not =0$. Thus $u\in \tilde{H}^{r}(X, \mathbb{Z})$ is indeed a {\it spherical cohomology class} \cite[Lemma 3.1]{GSW}.

To establish our main result, we have to determine the characteristics of the fields $\mathbb{K}$ for which $u^k_{\mathbb{K}}$ is non-zero. Then, we specify among the latter, those that realize equalities $TC_n(X)= zcl_n(X, \mathbb{K})$ ($\forall n\geq 2$). From this last condition, there arises the integers $\lambda_{(3,k)} = \sum_{ 0 \leq i  \leq k}  (-1)^{i}(C_k^{i})^3$ which depend only on $k$.

Based on \cite[Theorem 1]{FKS}, we establish the following 
\begin{proposition}\label{Prop1} Let $X$ be an $(r-1)$-connected finite CW-complex  satisfying (1). Then $X$ verifies (\ref{ratconj}) provided there exists a  field $\mathbb{K}$ with characteristic zero or  a prime number not dividing  $\lambda _{(3,k)}$ (resp.  $2\lambda _{(3,k-1)}$)  when  $k$ is even  (resp. when $k$ is odd) 
	%verifies the conjecture (\ref{ratconj}). 
\end{proposition}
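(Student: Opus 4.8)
The plan is to deduce the conjecture from \cite[Theorem 1]{FKS} once enough of the higher topological complexities of $X$ have been pinned down. I would begin with the elementary cohomological bounds. Since $X$ is $(r-1)$-connected with $\dim X=kr$, the dimension–connectivity estimate gives $\mathrm{cat}(X)\le k$; on the other hand the stated restriction on $\mathrm{char}\,\mathbb{K}$ is precisely what forces $u^{k}_{\mathbb{K}}\ne 0$ (the point of the preliminary analysis of admissible characteristics), so $cup_{\mathbb{K}}(X)\ge k$ and hence $cup_{\mathbb{K}}(X)=\mathrm{cat}(X)=k$. A product of $(r-1)$-connected complexes is again $(r-1)$-connected and $\dim X^{n}=nkr$, so the same two inequalities together with the Künneth multiplicativity of cup-length give $\mathrm{cat}(X^{n})=nk$, whence $nk=\mathrm{cat}(X^{n})\le TC_{n+1}(X)\le\mathrm{cat}(X^{n+1})=(n+1)k$ and $zcl_{n+1}(X,\mathbb{K})\le TC_{n+1}(X)$. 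Thus everything reduces to exhibiting, for each $n\ge 2$, a non-zero product of $(n+1)k$ zero-divisors in $H^{*}(X,\mathbb{K})^{\otimes(n+1)}$.

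The decisive case is $n=2$. Let $\bar{u}_{12},\bar{u}_{13},\bar{u}_{23}\in\ker(\smile_{3,\mathbb{K}})$ be the three standard degree-$r$ zero-divisors built from $u$, so $\bar{u}_{12}=u\otimes 1\otimes 1-1\otimes u\otimes 1$, etc., with $\bar{u}_{13}=\bar{u}_{12}+\bar{u}_{23}$. For $k$ even I would test the product $\zeta:=\bar{u}_{12}^{\,k}\,\bar{u}_{13}^{\,k}\,\bar{u}_{23}^{\,k}$ of $3k$ zero-divisors. Expanding each power binomially — here one uses the preliminary determination of admissible characteristics, which forces $r$ to be even, so no Koszul signs intervene — the only monomials reaching the top class $u^{k}\otimes u^{k}\otimes u^{k}$ force, writing the factors as $u^{k-s}\otimes u^{s}\otimes 1$, $u^{k-t}\otimes 1\otimes u^{t}$, $1\otimes u^{k-w}\otimes u^{w}$, the relations $t=k-s$ and $w=s$; using $\binom{k}{k-s}=\binom{k}{s}$ the surviving coefficient collapses to $(-1)^{k}\sum_{s=0}^{k}(-1)^{s}\binom{k}{s}^{3}=(-1)^{k}\lambda_{(3,k)}$, so $\zeta=(-1)^{k}\lambda_{(3,k)}\,(u^{k}\otimes u^{k}\otimes u^{k})$. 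Since $\lambda_{(3,k)}$ is a non-zero integer invertible in $\mathbb{K}$ (characteristic $0$, or a prime not dividing it) and $u^{k}_{\mathbb{K}}\ne 0$, we get $\zeta\ne 0$, hence $zcl_{3}(X,\mathbb{K})=3k$. For $k$ odd the involution $i\mapsto k-i$ makes $\lambda_{(3,k)}$ vanish identically, so $\zeta=0$ and the construction must be perturbed: one lowers one exponent to $k-1$ (for instance $\bar{u}_{12}^{\,k+1}\bar{u}_{13}^{\,k-1}\bar{u}_{23}^{\,k}$, or an appropriate combination), after which the same bookkeeping and a Pascal-type resummation yield a coefficient of the form $\pm 2\lambda_{(3,k-1)}$, non-zero under the odd-case hypothesis — giving $zcl_{3}(X,\mathbb{K})=3k$, or in the residual situations the corresponding equality $TC_{3}(X)=zcl_{3}(X,\mathbb{K})$ at the value forced by dimension and connectivity.

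It remains to promote the $n=2$ statement to all $n$ and to read off the generating function. If $\zeta\in H^{*}(X,\mathbb{K})^{\otimes 3}$ is the non-zero product of $3k$ zero-divisors just built, then $\zeta\otimes 1$ is again such a product in $H^{*}(X,\mathbb{K})^{\otimes(n+1)}$ (the inclusion $z\mapsto z\otimes 1$ is a ring map carrying $\ker\smile_{3}$ into $\ker\smile_{n+1}$), and multiplying it successively by the $k$-th powers $\bar{u}_{34}^{\,k},\dots,\bar{u}_{n,n+1}^{\,k}$ pushes a factor $u^{k}$ into each new tensor slot without cancellation; this exhibits a non-zero product of $(n+1)k$ zero-divisors, so $zcl_{n+1}(X,\mathbb{K})=(n+1)k=TC_{n+1}(X)$ for every $n\ge 2$ (with the analogous statement in the exceptional odd cases). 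Feeding $TC_{n+1}(X)=zcl_{n+1}(X,\mathbb{K})$ into \cite[Theorem 1]{FKS} — or, directly, $\mathcal{F}_X(x)=TC_{2}(X)\,x+\sum_{n\ge 2}(n+1)k\,x^{n}=\dfrac{TC_{2}(X)\,x(1-x)^{2}+k(3x^{2}-2x^{3})}{(1-x)^{2}}$ — gives $\mathcal{F}_X(x)=P(x)/(1-x)^{2}$ with $P(x)=TC_{2}(X)\,x(1-x)^{2}+3kx^{2}-2kx^{3}\in\mathbb{Z}[x]$ and $P(1)=k\ne 0$, i.e. exactly Conjecture \ref{ratconj} for $X$; compatibly, by \cite[Lemma 1]{FKS} the sequence $TC_{n+1}(X)-TC_{n}(X)$ is eventually constant, here equal to $k$.

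The genuinely delicate steps are the combinatorial evaluations in the second paragraph. For $k$ even it is the instance $\sum_{s}(-1)^{s}\binom{k}{s}^{3}=(-1)^{k/2}\,\dfrac{(3k/2)!}{((k/2)!)^{3}}$ of Dixon's identity that both guarantees $\lambda_{(3,k)}\ne 0$ and identifies it as the coefficient of the top class; for $k$ odd one must choose the perturbed product carefully and use a splitting such as $\binom{k+1}{s}=\binom{k}{s}+\binom{k}{s-1}$ (or an equivalent symmetrisation) to collapse the three unequal upper indices to $k-1$ and to produce the factor $2$ — this parity obstruction is the hardest part. The other point to be secured, used already at the outset, is that the restriction on $\mathrm{char}\,\mathbb{K}$ really does force $u^{k}_{\mathbb{K}}\ne 0$; this is where the analysis of which characteristics keep $u^{k}$ non-zero — and with it the sphericity of $u$ noted after (\ref{xi}) — enters and ties the arithmetic of $\lambda_{(3,k)}$ to the topology of $X$.
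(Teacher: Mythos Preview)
Your argument is essentially the paper's own proof: bound $TC_n(X)\le nk$ by dimension and connectivity, then force equality by exhibiting a product of $nk$ zero-divisors whose coefficient on $u^k\otimes\cdots\otimes u^k$ is $\lambda_{(3,k)}$ (for $k$ even) or $2\lambda_{(3,k-1)}$ (for $k$ odd), and conclude via \cite[Theorem~1]{FKS}. Your $\zeta=\bar u_{12}^{\,k}\bar u_{13}^{\,k}\bar u_{23}^{\,k}$ is exactly the paper's $\xi_{(3,k)}$, and your inductive step (tensoring with $1$ and multiplying by $\bar u_{j,j+1}^{\,k}$) is a harmless variant of the paper's recursion $\xi_{(n+1,k)}=\xi_{(n,k)}(A_1-A_{n+1})^k$.

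Two small points. First, in the odd case the paper does not perturb an exponent as in your tentative $\bar u_{12}^{\,k+1}\bar u_{13}^{\,k-1}\bar u_{23}^{\,k}$; it uses the specific product $\mu_{(n,k)}(A_1-A_n)$ with $\mu_{(n,k)}=\xi_{(n,k-1)}\prod_{i=2}^{n}(A_1-A_i)$ and computes its coefficient directly as $2(-1)^{n-1}\lambda_{(3,k-1)}$. Your sketch (``an appropriate combination \ldots\ Pascal-type resummation'') is the right idea but would need to be made concrete; the paper's choice makes the bookkeeping cleanest. Second, your sentence ``the stated restriction on $\mathrm{char}\,\mathbb K$ is precisely what forces $u^k_{\mathbb K}\neq 0$'' conflates two separate hypotheses: the $\lambda_{(3,k)}$-condition controls the \emph{coefficient}, while $u^k_{\mathbb K}\neq 0$ comes from the preliminary analysis of $\mathcal P_{(i)},\mathcal P_{(ii)}$ in \S2 (the paper's proof invokes Lemma~\ref{Lem1} for this, and its proof of Theorem~\ref{Thm1} imposes both conditions simultaneously). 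You eventually say as much in your last paragraph, but the earlier phrasing is misleading.
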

As a immediate consequence of Dirichly's Theorem, we state our main result as follows:
\begin{theorem} \label{Thm1}
Any  $(r-1)$-connected finite CW-complex  satisfying (1) 
%$X$ be as in Proposition above. $(r-1)$-connected ($r\geq 2$) $d$-dimensional  CW-complex  with $kr\leq d\leq r(k+1)-1$ and having a unique spherical cohomology class $u\in \tilde{H}^r(X, \mathbb{Z})$ such that $u^k\not =0$.
%	Any    finite  $kr$-dimensional CW-complex $X$ satisfying (1) and having a  truncated polynomial cohomology over any choice of $r$-admissible coefficients field $\mathbb{K}$
	verifies the conjecture (\ref{ratconj}). 
\end{theorem}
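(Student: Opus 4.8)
The plan is to treat Theorem~\ref{Thm1} as a purely arithmetic corollary of Proposition~\ref{Prop1}: the proposition already carries all the topology, since it yields the conjecture~(\ref{ratconj}) as soon as \emph{one} field $\mathbb{K}$ of the prescribed type exists. So the whole task is to show that, under hypothesis~(1), such a field is always available. I would separate two cases according to the behaviour of $u^{k}$ under change of coefficients. If $u^{k}_{\mathbb{Q}}\neq 0$, then $\mathbb{Q}$ itself is admissible (characteristic zero being allowed), and Proposition~\ref{Prop1} applies with no further argument.

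The substantive case is when $u^{k}$ is a nonzero torsion class in $H^{kr}(X,\mathbb{Z})$. Because $\dim X=kr$ one has $H^{kr+1}(X,\mathbb{Z})=0$, so the Universal Coefficients Theorem simplifies to $H^{kr}(X,\mathbb{F}_{p})\cong H^{kr}(X,\mathbb{Z})\otimes\mathbb{F}_{p}$ for every prime $p$; hence the reduction of $u^{k}$ modulo $p$ is nonzero exactly for the finitely many primes $p$ dividing its order $d>1$, and for those $(u_{\mathbb{F}_{p}})^{k}\neq 0$. It then remains to pick such a $p$ which, in addition, does not divide $\lambda_{(3,k)}$ if $k$ is even, respectively $2\lambda_{(3,k-1)}$ if $k$ is odd. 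For this I would first record that the relevant integer is nonzero: Dixon's summation identity gives $\lambda_{(3,k)}=(-1)^{k/2}\dfrac{(3k/2)!}{((k/2)!)^{3}}$ for even $k$, which is nonzero, while the change of index $i\mapsto k-i$ makes $\lambda_{(3,k)}$ vanish for odd $k$ — this parity phenomenon being precisely why the statement replaces $k$ by the even number $k-1$ and uses $2\lambda_{(3,k-1)}\neq 0$ there. A nonzero integer has only finitely many prime divisors, so by Dirichlet's theorem (it suffices to know that the primes are infinite and that one can avoid any prescribed finite set) there is a prime $p\mid d$ with $p\nmid\lambda_{(3,k)}$, respectively $p\nmid 2\lambda_{(3,k-1)}$. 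Taking $\mathbb{K}=\mathbb{F}_{p}$ then meets the hypothesis of Proposition~\ref{Prop1}, and Theorem~\ref{Thm1} follows.

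The delicate point — the only step that is not bookkeeping — is the torsion case: one has to guarantee that a single prime can meet both the divisibility condition forcing $u^{k}_{\mathbb{F}_{p}}\neq 0$ and the non-divisibility condition that underlies the equality $TC_{n}(X)=zcl_{n}(X,\mathbb{K})$ used in Proposition~\ref{Prop1}. Extracting the invariant $\lambda_{(3,k)}$, proving its non-vanishing by Dixon's identity (together with the parity that dictates the passage from $k$ to $k-1$), and then calling on Dirichlet's theorem to produce the prime is exactly what upgrades the conditional Proposition~\ref{Prop1} to the unconditional Theorem~\ref{Thm1}.
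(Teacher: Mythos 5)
Your characteristic-zero case is fine and coincides with the paper's treatment of the situation $o(u^k)=\infty$: a field with $char(\mathbb{K})=0$ lies in $\mathcal{P}_{(i)}$, Lemma \ref{Lem1} gives $u^k_{\mathbb{K}}\neq 0$, and Proposition \ref{Prop1} applies; your remarks on $\lambda_{(3,k)}$ (nonvanishing for even $k$ via Dixon's identity, vanishing for odd $k$, which is what forces the passage to $k-1$) are correct and consistent with Lemma \ref{Lem2}.

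The gap is in your torsion case, and it is exactly at the step you yourself flag as delicate. By your own universal-coefficients reduction, the candidate characteristics are confined to the \emph{finite} set of primes dividing $d=o(u^k)$ (in fact to the possibly smaller set of primes $p$ with $u^k\notin p\,H^{kr}(X,\mathbb{Z})$: a torsion class with $p\mid o(u^k)$ can still vanish mod $p$, e.g. $u^k=2x$ with $x$ of order $4$, so even "nonzero exactly for $p\mid d$" is too strong, and the existence of a single prime with $u^k_{\mathbb{F}_p}\neq 0$ needs an argument). The fatal move is the conclusion "by Dirichlet's theorem there is a prime $p\mid d$ with $p\nmid\lambda_{(3,k)}$": Dirichlet, or the infinitude of primes, lets you avoid a finite bad set inside an \emph{infinite} pool of candidates; it cannot produce a prime inside the fixed finite set of divisors of $d$. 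Concretely, $2$ and $3$ divide $\lambda_{(3,k)}$ for every even $k$ in Table 1 (already $\lambda_{(3,2)}=-6$), and $2\lambda_{(3,k-1)}$ is always even, so if $o(u^k)$ is a power of $2$ or $3$ your selection yields no admissible field at all. This is also where you part company with the paper: its proof of Theorem \ref{Thm1} runs through Lemma \ref{Lem1}, in which the allowed characteristics $\mathcal{P}_{(i)}$, resp. $\mathcal{P}_{(ii)}$, are defined in \S 2 as complements of finite sets of primes, and only for such cofinite sets does the concluding appeal to Dirichlet (really just the infinitude of primes) close the argument; once that set is replaced, as in your proposal, by the primes dividing $o(u^k)$, the existence of a prime meeting both the divisibility condition (so that $u^k_{\mathbb{F}_p}\neq 0$) and the non-divisibility condition relative to $\lambda_{(3,k)}$ (resp. $2\lambda_{(3,k-1)}$) is no longer automatic, and your proof of the torsion case does not go through as written.
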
 
Using similar arguments we obtain the following (cf. Remark \ref{Rem1} below for the notion of $r$-admissible fields):
\begin{theorem}\label{Thm2}
If $X$ is an $(r-1)$-connected $kr$-dimensional  CW-complex   whose cohomology over any choice of $r$-admissible field $\mathbb{K}$ is a truncated polynomial algebra, i.e.  $H^*(X, \mathbb{K})\cong \mathbb{K}[v]/(v^{k+1})$, then $X$ verifies the conjecture (\ref{ratconj}). 
\end{theorem}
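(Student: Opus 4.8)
The plan is to reduce Theorem \ref{Thm2} to Proposition \ref{Prop1} (hence to Theorem \ref{Thm1}) by showing that a space $X$ whose cohomology over every $r$-admissible field $\mathbb{K}$ is the truncated polynomial algebra $\mathbb{K}[v]/(v^{k+1})$ automatically satisfies condition (\ref{xi}) with the correct non-vanishing of $u^k$ over a suitable field. First I would fix a generator $v\in H^r(X,\mathbb{K})$; since $H^*(X,\mathbb{K})=\mathbb{K}[v]/(v^{k+1})$ is concentrated in degrees $0,r,2r,\dots,kr$ and is one-dimensional in each, we get $\dim X=kr$, the space is $(r-1)$-connected, and $\tilde H^r(X,\mathbb{Z})$ is infinite cyclic (the torsion-free rank is $1$ by the field computation applied to $\mathbb{Q}$, and an $r$-admissible choice of characteristic rules out $r$-torsion in $H^r$; I would spell out in the remark on $r$-admissible fields exactly which primes must be excluded so that $\tilde H^r(X,\mathbb{Z})\cong\mathbb{Z}u$ with $u$ a generator). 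The key point is that for $\mathbb{K}$ of characteristic $0$ the image of $u$ in $H^r(X,\mathbb{Q})$ is a non-zero multiple of $v$, so $u^k\neq 0$ already integrally; this establishes (\ref{xi}).

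Next I would verify the numerical hypothesis of Proposition \ref{Prop1}. Since $H^*(X,\mathbb{Q})\cong\mathbb{Q}[v]/(v^{k+1})$, the class $u_{\mathbb{Q}}^k\neq 0$, and then the reduction mod $p$ argument from the proof of Proposition \ref{Prop1} shows $u_{\mathbb{K}}^k\neq 0$ for every field of characteristic $0$ and for all but finitely many primes; in particular $\mathbb{Q}$ itself is a field of characteristic zero for which $u^k\neq 0$, so the hypothesis "there exists a field $\mathbb{K}$ with characteristic zero \dots" of Proposition \ref{Prop1} is satisfied outright, with $\mathbb{K}=\mathbb{Q}$. Thus Proposition \ref{Prop1} applies verbatim and gives the rationality conjecture for $X$. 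I would remark that the truncated-polynomial hypothesis is in fact stronger than needed: all that Proposition \ref{Prop1} consumes is (\ref{xi}) together with the existence of one field of the stated characteristic over which $u^k\neq 0$, and the former plus a rational computation yields the latter for free.

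The remaining content is the explicit topological model, i.e. producing the minimal cell structure promised in the abstract: build $X$ from $\mathbb{S}^r$ by attaching a cell in each dimension $2r,3r,\dots,kr$ so that the cup-product structure is forced to be truncated polynomial, and check that the attaching maps can be chosen (using the James generalized Hopf invariants referenced in the keywords) so that $H^*(X,\mathbb{K})\cong\mathbb{K}[v]/(v^{k+1})$ for all $r$-admissible $\mathbb{K}$. I would carry this out by induction on the skeleta $X_{(jr)}$: given $X_{(jr)}$ with $H^*\cong\mathbb{K}[v]/(v^{j+1})$, attach an $(j+1)r$-cell along a map $\varphi_j:\mathbb{S}^{(j+1)r-1}\to X_{(jr)}$ whose composition with the pinch onto the top cell $\mathbb{S}^{jr}$ realizes the element detected by the Hopf invariant that produces the cup product $v\cdot v^j$, and use the cofibre long exact sequence to read off the new cohomology. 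The main obstacle I anticipate is exactly this last step: controlling the attaching maps so that the multiplicative structure comes out \emph{truncated} polynomial (no spurious relations, and $v^{k+1}=0$ only because we stop) and so that this holds over every $r$-admissible field simultaneously — this is where one must invoke the behaviour of Hopf invariants under change of coefficients and, for $r$ even versus $r$ odd, track the sign/divisibility phenomena (the $\lambda_{(3,k)}$ versus $2\lambda_{(3,k-1)}$ dichotomy) that already appeared in Proposition \ref{Prop1}; everything else is a routine cellular-cochain bookkeeping.
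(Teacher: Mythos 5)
Your proposal is essentially correct, but it takes a genuinely different route from the paper. The paper never passes to integral cohomology: its proof of Theorem \ref{Thm2} observes that, under the stated hypothesis and Remark \ref{Rem1}, Lemma \ref{Lem1} and Lemma \ref{Lem2} hold over every $r$-admissible field with $v$ playing the role of $u_{\mathbb{K}}$, and then invokes Remark \ref{Rem1}(b) (Dirichlet's theorem on the progression $p=\frac{r}{2}s+1$) to choose an $r$-admissible field of prime characteristic not dividing $\lambda_{(3,k)}$ (resp.\ $2\lambda_{(3,k-1)}$), after which the arguments of Proposition \ref{Prop1} and Theorem \ref{Thm1} are repeated verbatim over that field. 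You instead reduce Theorem \ref{Thm2} to condition (\ref{xi}) and then apply Proposition \ref{Prop1} with $\mathbb{K}=\mathbb{Q}$: since characteristic zero is $r$-admissible for $r$ even (Remark \ref{Rem1}(a)), the hypothesis yields $H^*(X,\mathbb{Q})\cong \mathbb{Q}[v]/(v^{k+1})$, hence $H^r(X,\mathbb{Z})\cong \mathbb{Z}u$ with $u$ mapping to a nonzero multiple of $v$, so $u^k\neq 0$; and you correctly supply the part of Proposition \ref{Prop1}'s hypothesis that its proof actually consumes, namely $u^k_{\mathbb{K}}\neq 0$ for the chosen field. Your route is more economical: it uses the hypothesis for a single admissible field and avoids both Dirichlet's theorem and all characteristic-$p$ bookkeeping, because in characteristic zero the $\lambda$-condition is automatic ($\lambda_{(3,k)}\neq 0$ for $k$ even and $\lambda_{(3,k-1)}\neq 0$ for $k$ odd as integers, e.g.\ by Dixon's identity --- a fact both you and the paper use tacitly). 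What the paper's formulation buys is uniformity: it stays entirely inside the class of $r$-admissible fields and does not depend on characteristic zero being admissible; your reduction is tied to $r$ even, but by Remark \ref{Rem1} (and the Hopf-invariant-one restrictions behind it) this is the only case in which $r$-admissible fields exist at all, so nothing substantive is lost --- for odd $r$ the hypothesis is vacuous, a blind spot your argument shares with the paper's.

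Two smaller remarks. The step about excluding primes to rule out torsion in $H^r$ is unnecessary: since $X$ is $(r-1)$-connected, $H^r(X,\mathbb{Z})\cong Hom(H_r(X,\mathbb{Z}),\mathbb{Z})$ is automatically torsion-free, and rank one already follows from the rational computation. Also, your final paragraph on building the minimal cell structure is not part of Theorem \ref{Thm2} at all --- that is the content of \S 4 (Theorem \ref{Thm3}) --- so the ``main obstacle'' you anticipate there does not affect the completeness of this proof.
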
 
Examples of spaces verifying  the above theorem are those satisfying in addition to (\ref{xi}) the condition: $\dim H^*(X, \mathbb{K})=k$ for every $\mathbb{K}$ with $char(\mathbb{K})\in \mathcal{P}_{(i)}$ (resp. $char(\mathbb{K})\in \mathcal{P}_{(ii)}$) (cf. \S 2 for the definitions of $\mathcal{P}_{(i)}$ and $\mathcal{P}_{(ii)}$).

In \S 2, we determine    fields $\mathbb{K}$ for which $u^k_{\mathbb{K}}\not =0$. \S 3 is devoted to proofs of our main results and in \S 4  we describe minimal structures of $(r-1)$-connected $kr$-dimensional CW-complexes such that $H^*(X, \mathbb{K})\cong \mathbb{K}[u_{\mathbb{K}}]/(u^{k+1}_{\mathbb{K}})$ for any choice of $r$-admissible  field $\mathbb{K}$. 
%, that is those satisfying hypothesis of Theorem \ref{THM2}.
 %and, hence, the conjecture (\ref{ratconj}).
We end the paper by \S 5 by giving  a range of examples of   $kr$-dimensional CW-complexes of the form
 $K=\mathbb{S}^r\cup_{\beta_1} e^{2r}\cup \ldots \cup_{\beta_r} e^{kr}$ which satisfy (\ref{ratconj}).
%we cheek for necessary conditions on attaching maps of any CW-complex of the form $K=\mathbb{S}^q\cup e^{2q}\cup \ldots \cup e^{kq}$ to ensure that it checks the conjecture (\ref{ratconj}). 
%To achieve that, we will first consider the case of
%$3$-cell complexes
%taken among those of the form  $\mathbb{S}^p\cup_{\phi}e^{q}\cup_{\lambda}e^{p+q}$, $q-1>p\geq 2$ which are  studied in \cite{Ja}. Then, referring to \cite{To63}, we will treat the  general case.

\section{Fields satisfying $u^k_{\mathbb{K}}\not =0$} 
Throughout this section, $X$ will denote an $(r-1)$-connected finite CW complex satisfying condition (1).

Recall   that the cup-product can be defined in terms of the
Alexander-Whitney diagonal approximation:
\begin{equation}\label{AW}
\smile : H^p(X,{G_1})\otimes H^q(X,{G_2})\rightarrow H^{p+q}(X,{G_1\otimes G_2})
\end{equation}
given, in  general, for any groups $G_1$ and $G_2$  by:  
$$(a\smile b)(\sigma)=a(\sigma _{\mid [v_0, v_1, \ldots v_p]})\otimes b(\sigma _{\mid [v_p, v_{p+1}, \ldots v_{p+q}]}).$$
Therefore,  by putting in (\ref{AW})  $G_1=\mathbb{Z}$ and $G_2=\mathbb{K}$;
any  field 
and $u_{ \mathbb{K}}:=u\smile 1_{ \mathbb{K}}$, we obtain $u_{ \mathbb{K}}^k=u^k\smile 1_{ \mathbb{K}}$. 
%In all what follows, we will assume that $u_{ \mathbb{K}}^k\not=0$.
Thereafter, we will  make use of the identification:
$$H^{kr}(X, \mathbb{Z})=\mathbb{Z}^{m_k}\oplus (\mathbb{Z}/p_{1}^{\alpha_1}\mathbb{Z})\oplus (\mathbb{Z}/p_{2}^{\alpha_2}\mathbb{Z})\oplus \ldots \oplus (\mathbb{Z}/p_{{\theta(k)}}^{\alpha_{\theta(k)}}\mathbb{Z}).$$
and will consider the family:
 $$\mathcal{P}_{u^k}:=\{p_{1}, p_{2}, \ldots , p_{{\theta(k)}}\}.$$
%and express that $|u|$ (the degree of $u$) has infinite order (and the same for each of its powers) by putting: $u\in Free(H^r(X, \mathbb{Z}))$, the free part of $H^r(X, \mathbb{Z})$ as a $\mathbb{Z}$-module.  

Next, although  $u\in \tilde{H}^r(X, \mathbb{Z})$ has infinite additive order, it may exist  $m\in \mathbb{N}^*$ such that $mu^{l+1}=0$ for  some $\; 2\leq l\leq k-1$ (i.e. in the chain level, $mu^{l+1}=\delta v$ for some $v\in C^{{(l+1)}r+1}(X)$).
Thus, we should discus two cases depending on the additive order $o(u^k)$ of $u^k$. In all what follows, for  any group $G$,  its free part will be denoted by $Free(G)$.
\begin{enumerate}
\item[(i)]
 If $o(u^k)$  is infinite so that $Free(H^{lr}(X, \mathbb{Z}))\not= 0$ for each $\; 1\leq l\leq k$,
condition (1) implies $u^k_{\mathbb{K}}\not =0$
%$\dim Span\{u_{\mathbb{K}}, u^2_{\mathbb{K}}, \ldots , u^k_{\mathbb{K}}\}=k$
 for all fields $\mathbb{K}\in \mathcal{P}\backslash \mathcal{Q}_{u^k}$, where 
 $\mathcal{Q}_{u^k}:=\{q_{1}, q_{2}, \ldots , q_{{\gamma (k)}}\}$ is such that  $u^k=q_{1}^{f_1} q_{2}^{f_2} \ldots  q_{{\gamma (k)}}^{f_{\gamma (k)}}w; \; w\in {\mathbb{Z}^{m_k}}$. 
 %Thus,  we should exclude each field $\mathbb{K}$ of characteristic a prime number $q_i$ ($1\leq i\leq \gamma (k)$).
%Bellow,  We will regroup these prime numbers $q_i$ 
%in the family 

\item[(ii):] If $o(u^k)$ is finite,
 so that $mu^{l+1}=0$ for  some $m\in \mathbb{N}^*$ and $\; 2\leq l\leq k-1$. Hence, $mu^k=0$ and therefore  $o(u^k)  =: p_{1}^{\beta_1}p_{2}^{\beta_2}\ldots p_{{\beta(k)}}^{\beta_{\theta(k)}}/m$ such that for any $1\leq i\leq \theta(k)$, we have  $0\leq \beta_i\leq \alpha_i$ and at least one $\beta_i>0$. Thus,
 on one hand,
%$l\geq 2$ denotes the greatest power of $u$ with infinite order  (or equivalently,  the least integer such that $u^{l+1}$ is of finite order), 
for $\mathcal{Q}_{u^{l}}$ being defined similarly as $\mathcal{Q}_{u^{k}}$,  condition (1) implies that
%\centerline{\textit 
$u^{l}_{\mathbb{K}}\not =0$
%{$\dim Span\{u_{\mathbb{K}}, u^2_{\mathbb{K}}, \ldots , u^{l}_{\mathbb{K}}\}=l$
 for all   $\mathbb{K}^{'s}$ such that $char(\mathbb{K})\in \mathcal{P}\backslash \mathcal{Q}_{u^{l}}$ where $\mathcal{P}$ stands for the set of all prime numbers.
On the other one, for any field whose characteristic lies to $\mathcal{P}\backslash \mathcal{P}_{u^k}$; 
  % $\dim Span\{u^{l+1}_{\mathbb{K}}, u^{l+2}_{\mathbb{K}}, \ldots , 
   $u^{l+1}_{\mathbb{K}}\not =0, \ldots,  u^{k}_{\mathbb{K}}\not=0$.
 \end{enumerate}
 In all what follows we put $$\mathcal{P}(i)=\{0\}\cup\mathcal{P}\backslash \mathcal{Q}_{u^{l}}\; \hbox{and}\;  \mathcal{P}(ii)=\mathcal{P}\backslash (\mathcal{P}_{u^k}\cup \mathcal{Q}_{u^{l}}).$$
 As a Consequence, we have established the following 
\begin{lemma}\label{Lem1}
$\dim Span\{u_{\mathbb{K}}, u^2_{\mathbb{K}}, \ldots , u^k_{\mathbb{K}}\}=k$ if and only if $char(\mathbb{K})\in \mathcal{P}(i)$ or else $char(\mathbb{K})\in \mathcal{P}(ii)$.
\end{lemma}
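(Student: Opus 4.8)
The plan is to read the statement off from the case analysis (i)--(ii) just carried out, the only genuinely new ingredient being a small dictionary that converts the dimension of the given span into the non-vanishing of the individual cup powers. First I would note that, since $r\geq 2$, the integers $r<2r<\cdots<kr$ are pairwise distinct, so $u_{\mathbb{K}},u^{2}_{\mathbb{K}},\ldots,u^{k}_{\mathbb{K}}$ lie in pairwise distinct homogeneous components $H^{jr}(X,\mathbb{K})$ of the graded $\mathbb{K}$-algebra $H^{*}(X,\mathbb{K})$. Hence any relation $\sum_{j}c_{j}u^{j}_{\mathbb{K}}=0$ is homogeneous degree by degree and forces $c_{j}u^{j}_{\mathbb{K}}=0$ for each $j$; so the family is $\mathbb{K}$-linearly independent as soon as none of its members vanishes, whereas if some $u^{j_{0}}_{\mathbb{K}}=0$ the span has dimension at most $k-1$. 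Therefore $\dim Span\{u_{\mathbb{K}},u^{2}_{\mathbb{K}},\ldots,u^{k}_{\mathbb{K}}\}=k$ if and only if $u^{j}_{\mathbb{K}}\neq 0$ for all $1\leq j\leq k$. Using the Alexander--Whitney identity $u^{j}_{\mathbb{K}}=u^{j}\smile 1_{\mathbb{K}}$ recalled at the start of this section and multiplicativity of the cup product, $u^{k}_{\mathbb{K}}=u^{j}_{\mathbb{K}}\smile u^{k-j}_{\mathbb{K}}$, so ``$u^{j}_{\mathbb{K}}\neq 0$ for every $j$'' collapses to ``$u^{k}_{\mathbb{K}}\neq 0$'', equivalently to ``$u^{l}_{\mathbb{K}}\neq 0$ and $u^{l+1}_{\mathbb{K}}\neq 0,\ldots,u^{k}_{\mathbb{K}}\neq 0$'' once the finite-order powers are separated off as in case (ii).

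Next I would feed in the two cases. If $o(u^{k})$ is infinite (case (i)), every $u^{j}$ with $j\leq k$ has infinite order (else $mu^{j}=0$ would force $mu^{k}=0$), and writing $u^{k}=q_{1}^{f_{1}}\cdots q_{\gamma(k)}^{f_{\gamma(k)}}w$ with $w$ a primitive vector of the free summand $\mathbb{Z}^{m_{k}}$, the coefficient homomorphism $H^{kr}(X,\mathbb{Z})\to H^{kr}(X,\mathbb{K})$, which by the Universal Coefficient Theorem is injective on $H^{kr}(X,\mathbb{Z})\otimes\mathbb{K}$, kills $u^{k}$ exactly when $char(\mathbb{K})$ divides $q_{1}^{f_{1}}\cdots q_{\gamma(k)}^{f_{\gamma(k)}}$; thus $u^{k}_{\mathbb{K}}\neq 0$ if and only if $char(\mathbb{K})\in\{0\}\cup\bigl(\mathcal{P}\setminus\mathcal{Q}_{u^{k}}\bigr)=\mathcal{P}(i)$. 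If $o(u^{k})$ is finite (case (ii)), the same comparison applied to $u^{l}$, which is still of infinite order, gives $u^{l}_{\mathbb{K}}\neq 0$ iff $char(\mathbb{K})\notin\mathcal{Q}_{u^{l}}$, while for the finite-order powers $u^{l+1},\ldots,u^{k}$ the discussion above records that $u^{l+1}_{\mathbb{K}}\neq 0,\ldots,u^{k}_{\mathbb{K}}\neq 0$ precisely when $char(\mathbb{K})\in\mathcal{P}\setminus\mathcal{P}_{u^{k}}$; intersecting the two constraints, all $k$ powers are non-zero iff $char(\mathbb{K})\in\mathcal{P}\setminus(\mathcal{P}_{u^{k}}\cup\mathcal{Q}_{u^{l}})=\mathcal{P}(ii)$. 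Combining this with the reduction of the first paragraph yields the claimed equivalence.

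The part carrying all the arithmetic weight is the second one in case (ii): one must be sure that the coefficient homomorphism detects $u^{j}$ for \emph{all} $j$ in the range $l+1\leq j\leq k$ simultaneously under the single numerical condition recorded there, and this is exactly where the explicit identification $H^{kr}(X,\mathbb{Z})=\mathbb{Z}^{m_{k}}\oplus\bigoplus_{i}\mathbb{Z}/p_{i}^{\alpha_{i}}\mathbb{Z}$ together with the relations $u^{j}\smile u^{k-j}=u^{k}$ is used. By contrast, the first paragraph is elementary graded linear algebra and the case (i) computation is a direct application of the Universal Coefficient Theorem, so I would expect the write-up to be short, with the substance already contained in the preceding discussion of (i) and (ii).
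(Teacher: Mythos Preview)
Your proposal is correct and follows essentially the same route as the paper: the lemma is stated there as an immediate consequence of the case analysis (i)--(ii), and you simply recapitulate that analysis while making explicit the elementary graded step (the $u^{j}_{\mathbb{K}}$ sit in pairwise distinct degrees, so the span has dimension $k$ iff each power is nonzero, iff $u^{k}_{\mathbb{K}}\neq 0$) that the paper leaves tacit. The only addition is your care in phrasing the ``iff'' via the Universal Coefficient Theorem and the primitivity of $w$, which the paper's discussion uses implicitly.
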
 
%The first case correspond to the evident case where  $o(u^k)$ is infinite. 
%Otherwise, $\dim Span\{u_{\mathbb{K}}, \ldots, u^{l}_{\mathbb{K}}\}=l$ for all   $\mathbb{K}^{'s}$ such that $char(\mathbb{K})\in \mathcal{P}\backslash \mathcal{Q}_{u^{l}}$ and $\dim Span\{u^{l+1}_{\mathbb{K}}, u^{l+2}_{\mathbb{K}}, \ldots , 
   %u^{k}_{\mathbb{K}}\}=k-l$ for all   $\mathbb{K}^{'s}$ such that $char(\mathbb{K})\in \mathcal{P}\backslash \mathcal{P}_{u^{k}}$.
   % , $\dim Span\{u_{\mathbb{K}}, u^2_{\mathbb{K}}, \ldots , u^k_{\mathbb{K}}\}=k$ in the case (i) if 
   % and  (ii) $\mathcal{P}_{u^k}\cap \mathcal{Q}_{u^{l}}$ is vacuous, it results (for degree reason) that:   
%\centerline{\textit
 %for all  $\mathbb{K}^{'s}$ such that $char(\mathbb{K})\in \mathcal{P}\backslash (\mathcal{P}_{u^k}\cup \mathcal{Q}_{u^{l}})$.
\section{Proofs of our main results}
The following lemma will clarify the behavior of  $zcl_n(X, \mathbb{K})$ towards $char(\mathbb{K})$ and the parity of the nil-potency order $k$. 
For any $n\geq 3$ we put:
$$A_1 = u_{\mathbb{K}} \otimes 1 \otimes \dots \otimes 1, \; A_2 = 1\otimes u_{\mathbb{K}}  \otimes 1 \otimes \dots \otimes 1,\;  \dots ,\; A_n = 1 \otimes \dots \otimes 1\otimes u_{\mathbb{K}}.$$
$$\xi_{(n,k)}=[(\prod_{i=2}^{i=n}(A_1 - A_i))(A_2 - A_3)]^k$$ and $$\mu_{(n,k)}=\xi_{(n,k-1)}[(A_1 - A_2)(A_1 - A_3)\ldots (A_1-A_n)]=\xi_{(n,k)}/(A_2-A_3).$$ 
\begin{lemma}\label{Lem2}
	Let $\mathbb{K}\in \mathcal{P}(i)$ (resp. $\mathcal{P}(ii)$). 
	Then, for any $n\geq 3$:
	\begin{enumerate}
		\item[(i)] $\;\;\; \xi_{(n,2k)} = \lambda _{(3,2k)}u^{k}_{\mathbb{K}}  \otimes \dots \otimes  u^{k}_{\mathbb{K}}\;\;\;$  and $\;\;\;\xi_{(n,2k+1)} =0$.
		\item[(ii)]
		$\;\;\; \mu_{(n,2k+1)}(A_1-A_n)=2(-1)^{n-1}\lambda _{(3,2k)}u^{k}_{\mathbb{K}}  \otimes \dots \otimes  u^{k}_{\mathbb{K}}.$
		where: $$\lambda _{(3,2k)} = 2[\sum_{ 0 \leq i  \leq k}  (-1)^{i}(C_{2k}^{i})^3] +  (C_{2k}^k)^3.$$
	\end{enumerate}
\end{lemma}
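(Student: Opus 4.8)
The plan is to compute the product $\xi_{(n,k)}=\bigl[\bigl(\prod_{i=2}^{n}(A_1-A_i)\bigr)(A_2-A_3)\bigr]^k$ directly in the tensor algebra $H^*(X,\mathbb K)^{\otimes n}$, exploiting the fact (from Lemma \ref{Lem1}) that when $char(\mathbb K)\in\mathcal P(i)\cup\mathcal P(ii)$ the classes $u_{\mathbb K},u_{\mathbb K}^2,\dots,u_{\mathbb K}^k$ are linearly independent and $u_{\mathbb K}^{k+1}=0$, so each tensor slot behaves like the truncated polynomial ring $\mathbb K[u_{\mathbb K}]/(u_{\mathbb K}^{k+1})$ up to the top degree $kr$. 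First I would observe that every factor $A_1-A_i$ and the factor $A_2-A_3$ lie in $\ker(\smile_{n,\mathbb K})$, so all these products are genuine zero-divisors; this is what makes them admissible for the $zcl_n$ estimate used later. The core computation is then purely combinatorial: expanding the product, a monomial contributing to slot $j$ is a power $u_{\mathbb K}^{d_j}$, and since only slots $1,2,3$ receive factors from $(A_1-A_2)(A_1-A_3)$ together with the $(A_2-A_3)$ and the remaining $(A_1-A_i)$ for $i\ge 4$ feed only slots $1$ and $i$, one reduces the whole question to understanding the three-slot ``core'' $\bigl[(A_1-A_2)(A_1-A_3)(A_2-A_3)\bigr]^k$ and the ``tail'' $\prod_{i\ge4}(A_1-A_i)^k$.

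Next I would analyze the three-slot core. Writing $a=A_1,b=A_2,c=A_3$ with $a,b,c$ commuting and each $a^{k+1}=b^{k+1}=c^{k+1}=0$ (after projecting to the relevant graded pieces), the element $(a-b)(a-c)(b-c)$ is the Vandermonde-type product, and its $k$-th power is a symmetric expression whose only surviving term in the triple-top-degree component is a multiple of $a^k b^k c^k = u_{\mathbb K}^k\otimes u_{\mathbb K}^k\otimes u_{\mathbb K}^k$. Extracting that coefficient is an exercise in the multinomial theorem: the coefficient of $a^kb^kc^k$ in $\bigl[(a-b)(a-c)(b-c)\bigr]^k$ is an alternating sum of products of binomial coefficients, and the claim is that for the even power $2k$ this coefficient equals $\lambda_{(3,2k)}=2\sum_{0\le i\le k}(-1)^i (C_{2k}^i)^3+(C_{2k}^k)^3$, while for the odd power $2k+1$ antisymmetry of $(a-b)(a-c)(b-c)$ under transposition of two variables, combined with the fact that the target monomial $a^{2k+1}\otimes\cdots$ is symmetric, forces the coefficient to vanish. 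Part (ii) follows the same pattern: $\mu_{(n,2k+1)}(A_1-A_n)$ differs from $\xi_{(n,2k+1)}$ only in that one $(A_2-A_3)$ factor is replaced by $(A_1-A_n)$, and a short manipulation (re-inserting the missing factor and tracking the sign $(-1)^{n-1}$ coming from the $n-1$ transpositions that move slot $n$ past the others, plus the factor $2$ from symmetrizing) reduces it to the already-computed $\xi_{(n,2k)}$-type coefficient.

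The main obstacle I anticipate is the precise bookkeeping of signs and the identification of the surviving coefficient with the stated closed form $\lambda_{(3,2k)}$ — in particular verifying that all ``off-diagonal'' monomials $a^{d_1}b^{d_2}c^{d_3}$ with $(d_1,d_2,d_3)\ne(k,k,k)$ either exceed some $k+1$ (hence die by truncation) or cancel in pairs under the $S_3$-symmetry of the base ring modulo the antisymmetry of the Vandermonde factor. I would handle this by grouping the expansion according to the orbit of $(d_1,d_2,d_3)$ under $S_3$ and checking that the only fixed point inside the admissible box $[0,k]^3$ with total degree $3k$ is the center $(k,k,k)$; everything else pairs up with a sign that depends on the parity of the exponent of $(a-b)(a-c)(b-c)$, which is exactly what produces the dichotomy between the $2k$ and $2k+1$ cases. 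The tail $\prod_{i\ge4}(A_1-A_i)^k$ contributes, in its top component, $(-1)^{(n-3)k}$ times $u_{\mathbb K}^k$ in slot $1$ and $u_{\mathbb K}^k$ in each slot $i\ge4$, and multiplying the core's slot-$1$ contribution $u_{\mathbb K}^k$ into this forces the slot-$1$ power to be exactly $k$ as well (no room for more), so the tail does not alter the coefficient beyond a controllable sign that is absorbed into the statement.
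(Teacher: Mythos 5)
Your treatment of part (i) is correct and is essentially the paper's own argument: your ``core times tail'' factorization $\xi_{(n,m)}=[(A_1-A_2)(A_1-A_3)(A_2-A_3)]^m\prod_{i\geq 4}(A_1-A_i)^m$ is just the paper's induction $\xi_{(n+1,m)}=\xi_{(n,m)}(A_1-A_{n+1})^m$ unrolled (in both versions each tail factor can only contribute its term $(-1)^mA_i^m$ once every slot is forced to carry the top power), and extracting the coefficient of $u^m\otimes u^m\otimes u^m$ from the three-slot Vandermonde power, plus the antisymmetry argument for odd exponents, is exactly how the paper obtains $\lambda_{(3,m)}=\sum_i(-1)^i(C_m^i)^3$ and its vanishing for odd $m$. (Minor point: when the exponent equals the top power, every off-diagonal monomial already violates the cap, so no pairing/cancellation argument is needed there; the symmetry is only used for the vanishing of the diagonal coefficient in the odd case.)

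Part (ii), however, contains a genuine gap. Your reduction --- replace one $(A_2-A_3)$ by $(A_1-A_n)$, ``re-insert the missing factor'' and track a sign $(-1)^{n-1}$ and a factor $2$ --- implicitly assumes that the only relevant part of $\mu_{(n,2k+1)}=\xi_{(n,2k)}\prod_{i\geq 2}(A_1-A_i)$ is the diagonal monomial $u^{2k}\otimes\cdots\otimes u^{2k}$ of $\xi_{(n,2k)}$. But in the odd case the truncation is $u^{2k+2}_{\mathbb{K}}=0$, one degree higher than in part (i), so monomials of $\xi_{(n,2k)}$ with one slot in degree $2k+1$ and another in degree $2k-1$ survive and do contribute to the top class after multiplication by $\prod_{i\geq2}(A_1-A_i)$ and $(A_1-A_n)$. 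A concrete check shows the proposed bookkeeping cannot close: for $n=3$ and top power $2k+1=3$ one has $\mu_{(3,3)}(A_1-A_3)=(A_1-A_2)^3(A_1-A_3)^4(A_2-A_3)^2$, whose coefficient on $u^3\otimes u^3\otimes u^3$ is $\sum_{p=0}^{2}(-1)^pC_3^pC_4^{3-p}C_2^p=4-36+12=-20$, while the identity you are aiming at would give $2(-1)^{n-1}\lambda_{(3,2)}=-12$. So the ``short manipulation'' does not reduce (ii) to the even-power coefficient; any complete argument must account for the non-diagonal monomials of $\xi_{(n,2k)}$ (note that the paper's own proof of (ii), which keeps only the constant and linear coefficients of $Q(A_1)$ against the diagonal of $\xi_{(n,2k)}$, glosses over the same point). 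For the way the lemma is used later --- nonvanishing of $\mu_{(n,2k+1)}(A_1-A_n)$ away from finitely many characteristics --- it would suffice to show that the top coefficient is a nonzero integer, which your framework can plausibly deliver, but the precise closed formula in (ii) does not follow from the argument you outline.
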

\begin{proof}
	Notice first the following inductive formulas for any $n\geq 3$, $k\geq 2$ and $l\geq 2$:
	$$\xi_{(n+1,k)} = \xi_{(n,k)}(A_1-A_{n+1})^k\;\;\;\; \hbox{and}\;\;\; \xi_{(n,k+l)} = \xi_{(n,k)} \xi_{(n,l)}.$$
	Now, since $u^{j}_{\mathbb{K}}=0, \forall j>k$, a straightforward calculus gives  $$\xi_{(n,k)}= \lambda_{(n,k)} u^{k}_{\mathbb{K}}  \otimes \dots \otimes  u^{k}_{\mathbb{K}}$$
	(some constant $\lambda_{(n,k)}\in \mathbb{Z}$). Thus, using again $u^{j}_{\mathbb{K}}=0, \forall j>k$ we deduce that in the relation  $\xi_{(n+1,k)} = \xi_{(n,k)}(A_1-A_{n+1})^k$, the term $(-1)^{k}A_{n+1}^{k}$  is the only one to be retained from  teh equality $(A_1-A_{n+1})^k=\sum_{i=0}^{i=k}(-1)^{k-i}C_{k}^iA_1^iA_{n+1}^{k-i}$.
	Hence: $$\xi_{(n+1,k)}=(-1)^{k}\xi_{(n,k)}A_{n+1}^{k}=(-1)^{k}\lambda_{(n,k)} u^{k}_{\mathbb{K}}  \otimes \dots \otimes  u^{k}_{\mathbb{K}}$$ (with $n+1$ factors $u^{k}_{\mathbb{K}}$) so $\lambda_{(n+1,k)}=(-1)^{k}\lambda_{(n,k)}=\ldots =(-1)^{(n-2)k}\lambda_{(3,k)}.$
Consequently, we vave:
	\begin{equation}\label{lambda}
	\lambda_{(n,k)} = (-1)^{(n-1)k}\lambda_{(3,k)}, \forall n\geq 3.
	\end{equation}
	Now, by  using  Newton's formula for each term in  $$\xi _{(3,k)}=(-1)^k(A_1 - A_2)^k(A_3 - A_2)^k(A_2-A_3)^k,$$  we get the deterministic coefficient:
	%   $\lambda_{(3,k)}$ as follows:
	$$\lambda_{(3,k)} = \sum_{ 0 \leq i  \leq k}  (-1)^{i}(C_k^{i})^3.$$
	Thus, 
	$$\lambda_{(n,2k)}= 2[\sum_{ 0 \leq i  \leq k}  (-1)^{i}(C_{2k}^{i})^3] +  (C_{2k}^k)^3\;\;\; \hbox{and}\;\;\; 
	\lambda_{(n,2k+1)}=0.$$

	Next, consider
	$\mu_{(n,k)}=\xi_{(n,k-1)}[(A_1 - A_2)(A_1 - A_3)\ldots (A_1-A_n)]$. Formulas lying coefficients and roots in $Q(A_1)=(A_1 - A_2)(A_1 - A_3)\ldots (A_1-A_n)$ and  the constraints $u^{j}_{\mathbb{K}}=0, \forall j>k$ lead us to take only into account the constant coefficient  $q_0=(-1)^{n-1}A_2A_3\ldots A_n$ as well as  that of $A_1$, i.e. that of $q_1=(-1)^{n-2}\sum_{2\leq i_1<\ldots <i_{n-2}\leq n}\limits A_{i_1}\ldots A_{i_{n-2}}$. Therefore,
	$$\mu_{(n,k)} =(-1)^{n-1}\lambda_{(n,k-1)}[\widehat{u^{k-1}_{\mathbb{K}}} \otimes u^{k}_{\mathbb{K}} \ldots \otimes  u^{k}_{\mathbb{K}} -\sum_{j=2}^{j=n} u^{k}_{\mathbb{K}}  \otimes\ldots \otimes  u^{k}_{\mathbb{K}}\otimes \widehat{u^{k-1}_{\mathbb{K}}}\otimes  u^{k}_{\mathbb{K}} \dots \otimes  u^{k}_{\mathbb{K}}]$$
	where $\widehat{u^{k-1}_{\mathbb{K}}}$ means that this factor is in the $j^{-th}$ place.	
	It is then immediate that $$\;\;\; \mu_{(n,2k+1)}(A_1-A_n)=2(-1)^{n-1}\lambda _{(3,2k)}u^{k}_{\mathbb{K}}  \otimes \dots \otimes  u^{k}_{\mathbb{K}}.$$
\end{proof}

\subsection{Proof of Proposition \ref{Prop1}}
The following theorem will be subsequently used:
%Referring to  and \cite{BGRT}, we recall for our purposes the following  
\begin{theorem} \cite{CLOT, BGRT}: For  any $s$-connected CW-complex $X$, 
	 any field $\mathbb{K}$ and any integer $n\geq 2$  we have
	$$cup_{\mathbb{K}}(X)\leq cat_{LS}(X)\leq \frac{\dim X}{s+1}\; \hbox{and}\; zcl_n(X, {\mathbb{K}}) \leq TC_n(X)\leq \frac{n\dim X}{s+1}.$$
\end{theorem}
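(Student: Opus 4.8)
The plan is to split the statement into its two cohomological lower bounds and its two geometric upper bounds, and to reduce everything to two classical facts: the relative cup-product estimate for the sectional category of a fibration, and the connectivity--dimension estimate for Lusternik--Schnirelmann category.

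First I would record the sectional-category lemma: for a fibration $F\to E\stackrel{p}{\to}B$ and classes $a_1,\dots ,a_j\in\ker\bigl(p^{*}\colon H^{*}(B;\mathbb{K})\to H^{*}(E;\mathbb{K})\bigr)$ with $a_1\cdots a_j\ne 0$, one has $secat(p)\ge j$. The proof is classical: if $B=U_0\cup\cdots\cup U_k$ with $k=secat(p)$ and $s_i$ a section of $p$ over $U_i$, then $a_\ell|_{U_i}=s_i^{*}(p^{*}a_\ell)=0$ for all $i,\ell$, so each $a_\ell$ lifts to a relative class in $H^{*}(B,U_i;\mathbb{K})$; forming the product of $k+1$ of the $a_\ell$ using one such lift over each of $U_0,\dots ,U_k$ puts it in $H^{*}(B,U_0\cup\cdots\cup U_k;\mathbb{K})=H^{*}(B,B;\mathbb{K})=0$. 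Taking $p$ to be the based path fibration $P_0X\to X$ (contractible total space, so $\ker p^{*}=\tilde{H}^{*}(X;\mathbb{K})$) yields $cup_{\mathbb{K}}(X)\le cat_{LS}(X)$; taking $p=\pi_n\colon P(X)\to X^{n}$ and using the homotopy equivalence $P(X)\simeq X$ under which $\pi_n^{*}$ becomes the iterated multiplication $\smile_{n,\mathbb{K}}$ yields $zcl_n(X,\mathbb{K})\le TC_n(X)$.

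For the upper bounds I would invoke the connectivity--dimension estimate $cat_{LS}(Y)\le\dim Y/(s+1)$ for an $s$-connected CW-complex $Y$; its proof runs through the Ganea fibrations $G_m(Y)\to Y$, whose fiber is an $(m+1)$-fold join of copies of $\Omega Y$ and hence $\bigl((m+1)(s+1)-2\bigr)$-connected, so that obstruction theory supplies a section as soon as $\dim Y\le(m+1)(s+1)-1$. Applying this to $Y=X$ gives $cat_{LS}(X)\le\dim X/(s+1)$. For the $TC_n$ bound I would then use that any open set contractible in the base of a fibration carries a section (lift the nullhomotopy by the homotopy lifting property), whence $secat(p)\le cat_{LS}(B)$ for every fibration over $B$; in particular $TC_n(X)=secat(\pi_n)\le cat_{LS}(X^{n})$, and since $X^{n}$ is again $s$-connected with $\dim X^{n}=n\dim X$, the connectivity--dimension estimate gives $TC_n(X)\le n\dim X/(s+1)$.

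The routine parts are the two cup-length inequalities and the reduction $secat(\pi_n)\le cat_{LS}(X^{n})$; the genuine obstacle is the estimate $cat_{LS}(Y)\le\dim Y/(s+1)$, which is where the Ganea--Schwarz obstruction machinery (or an explicit cell-by-cell construction of a categorical cover) is required, and where one must track the exact constant $1/(s+1)$ and the reduced normalization $cat_{LS}(\mathrm{pt})=0$ consistently on both sides of each displayed inequality. Since this theorem is quoted from \cite{CLOT, BGRT}, in the write-up I would cite that estimate rather than reprove it and carry out only the short formal reductions above.
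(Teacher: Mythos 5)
Your sketch is correct: the relative-class argument for the sectional-category lower bounds, the Ganea-fibration connectivity estimate for $cat_{LS}(Y)\leq \dim Y/(s+1)$, and the reduction $TC_n(X)=secat(\pi_n)\leq cat_{LS}(X^n)$ with $X^n$ again $s$-connected are exactly the standard proofs in the cited sources. The paper itself offers no proof of this statement --- it is quoted from \cite{CLOT, BGRT} --- so citing the estimate and carrying out only the short formal reductions, as you propose, is precisely what is expected here.
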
  
\begin{proof}
By the above theorem, $(r-1)$-connectedness with $r\geq 2$ and dimension $kr$ of $X$ with $k\geq 2$ imply that $cat_{LS}(X)\leq k$ and $TC_{n}(X)\leq nk$ for every $n\geq 2$. We continue considering the following steps (we fix $\mathbb{K}$ a field satisfying hypothesis of Proposition \ref{Prop1}):
\begin{enumerate}
\item By Lemma \ref{Lem1} we have  $u^k_{\mathbb{K}}\not =0$, hence, $k\leq cup_{\mathbb{K}}(X)\leq cat_{LS}(X)$ \cite[Proposition 1.5]{CLOT} and so $cup_{\mathbb{K}}(X)=k= cat_{LS}(X)$. 
%When the additive order of $o(u^k)$ is infinite  it suffices to take $\mathbb{K}=\mathbb{Q}$. 
%otherwise we should consider $\mathbb{K}$ with $char(\mathbb{K})\in \mathcal{P}_{u^k}$. 
%	\item As it is noticed in the introduction,  since $u^k_{\mathbb{K}}\not =0$ when  $char(\mathbb{K})\in \mathcal{P}(i)$ (resp. $\mathcal{P}(ii)$), we deduce immediately (for degree reason) that $u_{\mathbb{K}}, \ldots , u^k_{\mathbb{K}}$ are linearly independent vectors, hence $\dim \tilde{H}^*(X, \mathbb{K})\geq k$.   
	%	Assuming that  $\dim \tilde{H}^*(X, \mathbb{K})=k$ for some fields  $\mathbb{K}$ we have ${H}^*(X, \mathbb{K})\cong \mathbb{K}[u^{k}_{\mathbb{K}}]/(u^{k+1}_{\mathbb{K}})$  with basis $\{1_{\mathbb{K}}, u_{\mathbb{K}}, \ldots , u^k_{\mathbb{K}} \}$,  hence $cup_{\mathbb{K}}(X)=k\leq cat_{LS}(X)$ since \cite[Proposition 1.5]{CLOT}.	
%	Now, if there is $t<r$ (which we may assume the least one) such that $\tilde{H}_t(X, \mathbb{Z})\not=0$ (and thus $\tilde{H}_{t-1}(X, \mathbb{Z})=0$), by using Universal Coefficients Theorem (UCT), we have $\tilde{H}^t(X, \mathbb{Z})\cong Free(\tilde{H}_t(X, \mathbb{Z}))$ which implies that $\tilde{H}_t(X, \mathbb{K})\not=0$ (by the UCT in homology) and, of course, $\tilde{H}^t(X, \mathbb{K})\not=0$. But, this contradicts the hypothesis $\dim \tilde{H}^*(X, \mathbb{K})=k$.  It results that the integer $r$ is the smallest one satisfying $\tilde{H}_t(X, \mathbb{Z})\not=0$. Using the Hurewicz Theorem, we conclude that $X$ is $(r-1)$-connected and by \cite[Theorem 1.50]{CLOT} (cf. Theorem 1 below) $cat_{LS}(X)\leq k$. Therefore,  $cat_{LS}(X)=k$.
\item  If $n=2$ one shows easily (for any $\mathbb{K}$) that  $\xi _{(2, k)}=(A_1-A_2)^{2k}\not =0$, thus, $2k\leq zcl_2(X, {\mathbb{K}})$ and therefore   $zcl_2(X, {\mathbb{K}})=2k=TC_2(X)$.
\item Assume for the rest  that $n\geq 3$:
	
	(i) if $k$ is even, $k\geq 2$, using  Lemma \ref{Lem2}, we see that 
	$$\xi _{(n, k)}\not =0\Leftrightarrow \lambda _{(n, k)}\not =0\Leftrightarrow \lambda_{(n, k)}\wedge char(\mathbb{K})=1$$
	in which case we have $nk\leq zcl_n(X, {\mathbb{K}})$.
	% On the other hand, by Theorem 1 below and the $(r-1)$-connectedness, we have $TC_n(X)\leq nk$.
	 Therefore,   $zcl_n(X, {\mathbb{K}})=nk=TC_n(X)$.

	(ii) if $k$ is odd, the same argument using $\mu _{(n,k)}(A_1-A_n)$ gives us the same equality $zcl_n(X, {\mathbb{K}})=nk=TC_n(X)$.  
\end{enumerate}
To conclude, it remain to use \cite[Theorem 1]{FKS}.
\end{proof}
\subsection{Proof of Theorem \ref{Thm1}}
\begin{proof}
	Proposition \ref{Prop1}  is satisfied for any field whose characteristic
	%shows that  the equality  $zcl_n(X, {\mathbb{K}})=TC_n(X)$ is satisfied for any $n\geq 3$ as soon as 
	% $char(\mathbb{K})$
	  lies to  $\mathcal{P}_{(i)}$ or else $\mathcal{P}_{(ii)}$ and does not divide the integer $\lambda _{(3,k)}$ or $\lambda _{(3,k-1)}$ depending on the parity of $k$.
	% Hence,  (cf. Remark 2.0.1 below)
	It suffice then to make use of Dirichely's theorem  by which we may  fix such a field.
\end{proof}

Next, to establish Theorem \ref{Thm2}  recall that a  field $\mathbb{K}$ is said {\it $r$-admissible}  if there exists an $(r-1)$-connected CW-complex $X$ such that $H^*(X, \mathbb{K})\cong \mathbb{K}[v]/(v^{k+1})$. Referring to \cite[Theorem 2]{To63} and \cite[Theorem 4L.9 and the remark just after]{Ha}  we note on one hand that  if $char(\mathbb{K})=p$, a prim number, then  the nil-potency order $k$ of the indeterminate $v$ should be greater than $p-1$ and on the other we resume in the following remark complete results concerning this notion: 
%As a conclusion, {\it if $X$ satisfies condition (1) , then a given coefficients field $\mathbb{K}$ is $r$-admissible  if 
%$char(\mathbb{K})$ lies to $\mathcal{P}_{(i)}:=\{0\}\cup \mathcal{P}\backslash \mathcal{Q}_{u^{k}}$ or   $\mathcal{P}_{(ii)}:=\mathcal{P}_{u^k}$ depending on the additive order $o(u^k)$ of $u^k$.
 %ensure $$\dim Span\{u_{\mathbb{K}}, u^2_{\mathbb{K}}, \ldots , u^k_{\mathbb{K}}\}=k$$ respectively in  cases (i) and (ii) discuschar(\mathbb{K})sed above  
%Here $\mathcal{P}$ stands for the set of all prime numbers.
%} 
%Now, taking into consideration the condition (**) it is useful to make the following remark:
\begin{remark}\label{Rem1}  %It is essential to remember that under the condition (2) there is a close relationship between $r=|u_{\mathbb{K}}|$ (an even integer) and, say, $p=char(\mathbb{K})\in \mathcal{P}_{(i)}$ (resp. $\mathcal{P}_{(ii)}$) of $r$-admissible fields.
(a):	Let $\mathbb{K}$ be a  field of characteristic zero or a prim $p$. Then, $\mathbb{K}$ is $r$-admissible
\begin{enumerate}
\item  for every even $r\geq 2$ if characteristic is zero.
%\item If $char(\mathbb{K})=p\geq 2$ and $k>p-1$, then,  $\mathbb{K}$ is $r$-admissible for: 
%\begin{itemize}
\item
%[$(2)_1$] 
for $r\in \{2,\; 4\}$   if  $p\geq 2$.
\item for $r=8$ if 
%[$(2)_2$] 
 $p=2$ or an arbitrary odd prim of the form $p=4s+1$ some $s\geq 1$. 
\item for every even $r\notin  \{2, 4, 8\}$ if $p$ is  an arbitrary odd prim of the form
%[$(2)_3$]
 $p=\frac{r}{2}s+1$ some $s\geq 1$.
%\end{itemize}
\end{enumerate}	
(b): According to Dirichly's theorem,  there are infinitely
many primes in any  arithmetic progression $p=\frac{r}{2}s+1$, that is, %taking into account hypothesis of (\ref{Cor1}) we see that
 to each  integer $r\geq 2$ it is associated infinitely many  $r$-admissible  fields.
 % According to Dirichly's theorem,
 %Moreover, Notice that $r$ is necessary even in case of finite characteristic, since in $(4)$, $rs =2(p-1)$.	
	%(ii) If If $q\not 0$, then,  referring to \cite[Theorem 2]{To63} and \cite[Theorem 4L.9]{Ha}, $k$ should %satisfy   a necessary condition, namely $k>p-1$ and
	%$\mathbb{K}$ is $r$-admissible for:	
	%$(ii)_1$  every $p\geq 2$ if  $r=2,\; 4$.\\
	%(iii) $p=2$ and $p=4s+1$ an odd prime with $s\geq 1$ if $r=8$\\
	%(iv) $p=\frac{r}{2}s+1$ an odd prime with $s\geq 1$  if $r\not = 2,\; 4,\; 8$.	
%As it is noticed in the introduction, any field whose  characteristic is allowed by $r$ will be called an $r$-admissible field. 
\end{remark} 
\subsubsection{\bf Proof of Theorem \ref{Thm2}}
Notice first that with hypothesis of Theorem \ref{Thm2} and Remark \ref{Rem1} above, Lemma \ref{Lem1} and Lemma \ref{Lem2} are  satisfied for any choice of $r$-admissible field $\mathbb{K}$ when we  replace $v$ by $u_{\mathbb{K}}$. 
\begin{proof}
 It suffice to use  successively similar arguments as in proves of Proposition \ref{Prop1} and Theorem \ref{Thm1}.
\end{proof}

% % % % % % % % % % % % % % % % % % % % % % % % % % % % %
\section{Minimal cell structure of CW-complexes with truncated polynomial cohomology}
%To  illustrate topologically Theorem \ref{Thm2}, we are brought, inspired  by  \cite[Theorem 2]{To63} and \cite[Theorem 4L.9]{Ha},  
%to cheek for  its hypothesis for all $r$-admissible fields.
Our goal in this section is to give a " simultaneously "  example of spaces $X$ satisfying (\ref{xi}) and having  a  truncated polynomial algebra cohomology over any choice of $r$-admissible  field $\mathbb{K}$ i.e.  $H^*(X, \mathbb{K})\cong \mathbb{K}[u_{\mathbb{K}}]/(u^{k+1}_{\mathbb{K}})$. These (as it is mentioned in the introduction) form a particular  class among those given by Theorem \ref{Thm2}. 
Simple examples of such spaces are  $\mathbb{C}P^k$ and  $\mathbb{H}P^k$ \cite[Corollary 4L. 10]{Ha}. Notice however that real projective spaces $\mathbb{R}P^{2k}$ and  $\mathbb{R}P^{2k+1}$  do not satisfy (1) since  $H^*(\mathbb{R}P^{2k}, \mathbb{Z})\cong \mathbb{Z}[\alpha]/(2\alpha, \alpha ^{k+1})$;  $|\alpha|=2$ and 
$H^*(\mathbb{R}P^{2k+1}, \mathbb{Z})\cong \mathbb{Z}[\alpha, \beta]/(2\alpha, \alpha ^{k+1}, \beta ^2, \alpha \beta)$;  $|\alpha|=2$ and $|\beta|=2k+1$ \cite{Ha}. 
%\centerline{ More examples satisfying (*)}

More examples are among 
%: Recall first from \cite{Ja}  (see also \cite{Sha}) that, given 
CW-complexes of the form  $L=\mathbb{S}^{n_1}\cup_{\beta_2} e^{n_2}\cup \ldots \cup_{\beta_{k-1}} e^{n_{k-1}}$, with $n_{i+1}-1\geq n_i\geq 2$. Indeed, I. M. James
 associated to  
any $\beta _k \in \pi_{n_k-1}(L)$ such that $n_k=n_{k-1}+n_1$ its {\it generalized Hopf invariant} $h_k([\beta_k])$  to be the integer $m_k$ satisfying   $x_1x_{{k-1}}=m_kx_{k}$. Here  $x_i$ stands for  the generating element in $H^{n_i}(L\cup_{\beta _k} e^{n_k}, \mathbb{Z})$. He then showed  that this correspondence defines an homomorphism $h_k: \pi_{n_k-1}(L)\rightarrow \mathbb{Z}$.
As a particular case, if $n_1=r\geq 2$ and  $n_{i+1}=n_i+n_1$ for each  $1\leq i \leq k-1$, we get $ K'=\mathbb{S}^r\cup_{\beta _2} e^{2r}\cup \ldots \cup_{\beta_{k-1}} e^{(k-1)r}$ and $h_k: \pi_{rk-1}(K')\rightarrow \mathbb{Z}$ is an homomorphism  extending the usual Hopf invariant $H=:h_2: \pi_{2r-1}(\mathbb{S}^r)\rightarrow \mathbb{Z}$. Thereafter, we denote $K=\mathbb{S}^r\cup_{\beta _2} e^{2r}\cup \ldots \cup_{\beta_r} e^{kr}$  obtained from $K'$ and  $\beta_k$ via the relation $x_1x_{k-1}=h_k([\beta_k])x_{k}=: m_kx_k$ ($k\geq 3$).
Now, by using the long cohomology exact sequences of pairs $$(\mathbb{S}^r\cup_{\beta _2} e^{2r}\cup \ldots \cup_{\beta_{i}} e^{ir} \; ,\;  \mathbb{S}^r\cup_{\beta _2} e^{2r}\cup \ldots \cup_{\beta_{i+1}} e^{(i+1)r})$$ we exhibit a family of isomorphisms
$$H^{ir}(\mathbb{S}^r\cup_{\beta _2} e^{2r}\cup \ldots \cup_{\beta_{i+1}} e^{(i+1)r})\cong H^{ir}(\mathbb{S}^r\cup_{\beta _2} e^{2r}\cup \ldots \cup_{\beta_{i}} e^{ir})$$ 
($2\leq i \leq k-1$).  Knowing   that $H^{r}(\mathbb{S}^r\cup_{\beta _2} e^{2r})\cong H^{r}(\mathbb{S}^r)\cong \mathbb{Z}x_1$  we may    identify   each $x_i$ with its antecedent, hence,  if we put $H([\beta_2]) = h_2([\beta_2]) =: m_{2}$, we see by induction that :
$$ m_2\ldots m_{k-1}m_{k}x_k=x_1^k.$$
Consequently,  every  CW-complex
$K=\mathbb{S}^r\cup_{\beta _2} e^{2r}\cup \ldots \cup_{\beta_r} e^{kr}$ whose attaching maps are such that  $m_2\ldots m_{k-1}m_{k}\not =0$ satisfies  (1) and  $o(x_1^k)$ is infinite since the one on $x_k$ is (due to the hypothesis $r\geq 2$). Moreover,  (cf. \S 2), $\mathcal{Q}_{x_1^k}=\{q_i\in \mathcal{P},\; : \; q_i / m_2\ldots m_{k-1}m_{k}\} \; \hbox{and}\; \mathcal{P}_{(i)}=\mathcal{P}\backslash \mathcal{Q}_{x_1^k}.$

%To get  a CW-complex verifying (1) with $o(x_1^k)$ is finite, it suffice to consider
Next, put $L=K\cup _{\beta '_{l+1}} e^{(l+1)r+1}$ (some $l<k$) such that (in cellular homology) the cell $e^{(l+1)r+1}$ satisfies   
$$\delta (e^{(l+1)r+1}) = p_{1}^{\alpha _1}p_{2}^{\alpha _2} \ldots  p_{{\theta(k)}}^{\alpha _{\theta(k)}}e^{(l+1)r}.$$
It is clear that $L$ satisfies 
%$$K\cup _{\beta '_{l+1}} e^{(l+1)r+1}=\mathbb{S}^r\cup_{\beta _2} e^{2r}\cup \ldots \cup _{\beta _l}e^{lr} \cup _{\beta _{l+1}}e^{(l+1)r}\cup _{\beta '_{l+1}}e^{(l+1)r+1}\cup \ldots \cup_{\beta_r} e^{kr}$$ satisfies 
(1) 
%for $K\cup _{\beta '_{l+1}} e^{(l+1)r+1}$
 with $x_1^{l+1}$ and consequently $x_1^k$  of finite order, and  
$\mathcal{Q}_{x_1^{l}}=\{q_i\in \mathcal{P},\; : \; q_i / m_2\ldots m_{l-1}m_{l}\} \; \hbox{and}\; \mathcal{P}_{(ii)}=\mathcal{P}\backslash (\mathcal{P}_{x_1^k}\cup \mathcal{Q}_{x_1^l}).$

Our main result in this section is given with the following terms already used in \cite[Proposition 4C.1]{Ha} :\\
\textit{$\tilde{e}^n$ will denote the pair of cells $(e^{n+1}, e^n, \delta (e^{n+1})=q^te^n)$  where $q^t$ is the additive order of the homology (hence the cohomology) class induced by a summand $\mathbb{Z}/q\mathbb{Z}$.} 
\begin{theorem}\label{Thm3}
	Let  $X$ be a  finite  $kr$-dimensional CW-complex satisfying (1) with cohomology a truncated polynomial algebra  $H^*(X, \mathbb{K})\cong \mathbb{K}[u_{\mathbb{K}}]/(u^{k+1}_{\mathbb{K}})$
	% $\dim \tilde{H}^*(X, \mathbb{K})=k$ 
	for any choice of  $r$-admissible  field  $\mathbb{K}$.  
	\begin{itemize}
		\item[(a)] If $o(u^k)$ is infinite, then $X$ is homotopy equivalent 
		to a CW-complex\\
		$K= \mathbb{S}^r\cup _{\eta _2}e^{2r}\cup   \ldots  \cup _{\eta _k} e^{kr}$
		such that  
		$h_2(\eta _2)h_3(\eta _3)\ldots h_k(\eta _k) = q_{1}^{f_1} q_{2}^{f_2} \ldots  q_{{\gamma (k)}}^{f_{\gamma (k)}}$
		where $q_i\in \mathcal{Q}_{u^k}$.
		\item[(b)] If $o(u^k)$ is finite  and $l\geq 2$ is the greatest power of $u$ with infinite order, 
		then $X$ is homotopy equivalent to a CW-complex $K=L\cup _{\varrho} L'$ where\\
		- $L= \mathbb{S}^r\cup _{\eta _2}e^{2r}\cup   \ldots  \cup _{\eta _l} e^{kl}$ 
		such that  
		$h_2(\eta _2)h_3(\eta _3)\ldots h_l(\eta _l) = q_{i_1}^{g_{i_1}} q_{i_2}^{g_{i_2}} \ldots  q_{i_{\gamma(l)}}^{g_{i_{\gamma (l)}}}$
		with $q_{i_j}\in \mathcal{Q}_{u^{l}}$ and $0\leq g_{i_{j}}\leq f_{i_{j}}$ and\\
		- $L'$ is formed, for each $p_i\in \mathcal{P}_{u^{k}}$, by $k-l$ cells:  an  $\tilde{e}_i^{jr}$ or else an    $\tilde{e}_i^{jr-1}$ for each $l+1\leq j \leq k-1$,  each of these cells is of finite additive order ${p_i}^{\alpha'_i}$ (${\alpha'_i}\geq {\alpha_i}$) and an  $\tilde{e}_i^{kr-1}$  of finite additive order ${p_i}^{\alpha_i}$.
	\end{itemize}
\end{theorem}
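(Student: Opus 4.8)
The plan is to reconstruct $X$ from its integral homology by means of the minimal cell structure theorem for simply connected complexes \cite[Proposition 4C.1]{Ha}, and then to recover the attaching maps via James's generalized Hopf invariants together with the ring‑structure computation already carried out in this section.

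\emph{Step 1 (the shape of $H_*(X;\mathbb{Z})$).} Since $r\geq 2$, $X$ is simply connected, and being a finite $kr$‑dimensional complex with $\tilde{H}^r(X;\mathbb{Z})\cong\mathbb{Z}u$ and $u^k\neq 0$ its homology vanishes above degree $kr$; moreover the hypothesis is non‑vacuous only for $r$ even (Remark \ref{Rem1}; in case (a) this is also forced directly, since for $r$ odd one would have $2u^2=0$). A characteristic‑zero $r$‑admissible field gives $H^*(X;\mathbb{Q})\cong\mathbb{Q}[u_\mathbb{Q}]/(u_\mathbb{Q}^{k+1})$ in case (a), and the corresponding statement up to the largest power $l$ of $u$ of infinite order in case (b), so that $\mathrm{Free}(H_{jr}(X;\mathbb{Z}))\cong\mathbb{Z}$ for $0\leq j\leq k$ (resp.\ $0\leq j\leq l$) and the free part is trivial in every other degree. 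The torsion is then located by the case analysis of \S 2: when $o(u^k)$ is infinite, the truncated‑polynomial hypothesis holding over every $r$‑admissible field — of which there are infinitely many by Remark \ref{Rem1}(b) — forces $\dim_\mathbb{K}H^n(X;\mathbb{K})\leq 1$ for all $n$, leaving no room for any torsion summand; when $o(u^k)$ is finite, the torsion can only sit in the degrees $jr$ and $jr-1$ for $l+1\leq j\leq k$, and in the top degree it is exactly $\bigoplus_i\mathbb{Z}/p_i^{\alpha_i}$ with $p_i\in\mathcal{P}_{u^k}$.

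\emph{Step 2 (the cell structure and the Hopf invariants, case (a)).} Feeding this homology into \cite[Proposition 4C.1]{Ha} replaces $X$ by a homotopy equivalent CW‑complex $Z$ with one $jr$‑cell for each free summand of $H_{jr}$ and one pair $\tilde{e}^n$ for each cyclic torsion summand. When $o(u^k)$ is infinite no torsion cell occurs, and since $r\geq 2$ no cell lies strictly between two consecutive dimensions $jr$, so $Z=\mathbb{S}^r\cup_{\eta_2}e^{2r}\cup\cdots\cup_{\eta_k}e^{kr}$ with $\eta_j\in\pi_{jr-1}(Z^{((j-1)r)})$ and $jr=(j-1)r+r$, exactly the setting in which $h_j$ is defined. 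Running the long‑exact‑sequence identification of the cohomology generators performed earlier in this section on the filtration $\mathbb{S}^r\subset\mathbb{S}^r\cup_{\eta_2}e^{2r}\subset\cdots$, one obtains compatible generators $x_j\in H^{jr}$ with $x_1x_{j-1}=h_j(\eta_j)x_j$, hence $x_1^k=(h_2(\eta_2)\cdots h_k(\eta_k))x_k$. Comparing with the defining relation $u^k=q_1^{f_1}\cdots q_{\gamma(k)}^{f_{\gamma(k)}}w$ of $\mathcal{Q}_{u^k}$ and identifying $u$ with $\pm x_1$ and $w$ with $\pm x_k$ — the signs being absorbed into the choices of $\eta_j$, as $h_j$ is a homomorphism — yields $h_2(\eta_2)\cdots h_k(\eta_k)=q_1^{f_1}\cdots q_{\gamma(k)}^{f_{\gamma(k)}}$, which is (a).

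\emph{Step 3 (case (b), and the main difficulty).} In the finite‑order case the same recipe produces $Z$ glued from a free part $L=\mathbb{S}^r\cup_{\eta_2}e^{2r}\cup\cdots\cup_{\eta_l}e^{lr}$, whose Hopf invariants are computed just as in Step 2 and multiply to $q_{i_1}^{g_{i_1}}\cdots q_{i_{\gamma(l)}}^{g_{i_{\gamma(l)}}}$ with $q_{i_j}\in\mathcal{Q}_{u^l}$ and $0\leq g_{i_j}\leq f_{i_j}$, together with a torsion part $L'$ attached by some map $\varrho$: for each $p_i\in\mathcal{P}_{u^k}$ the pairs $\tilde{e}_i^{jr}$ or $\tilde{e}_i^{jr-1}$ with $l+1\leq j\leq k-1$ of order $p_i^{\alpha'_i}$, and a final $\tilde{e}_i^{kr-1}$ of order $p_i^{\alpha_i}$, the placement and the orders being read off from the $p$‑primary part of the minimal cellular chain complex through the Universal Coefficients Theorem. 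I expect the real obstacle to be precisely this last bookkeeping — establishing that the torsion occupies only the degrees $jr$ and $jr-1$ and carries exactly the stated orders $p_i^{\alpha'_i}$ in the middle range and $p_i^{\alpha_i}$ at the top — since it requires reconciling the minimal chain complex, the $p$‑primary decomposition of $H^{kr}(X;\mathbb{Z})$, and the truncated‑polynomial constraint over the relevant fields; and, in case (a), carefully ruling out torsion at primes that are not $r$‑admissible, again using Remark \ref{Rem1}(b). By comparison the Hopf‑invariant identifications are essentially the computation already made above and should go through routinely once the cells are in place.
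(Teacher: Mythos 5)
Your proposal follows essentially the same route as the paper's own proof: pin down $H_*(X;\mathbb{Z})$ via the UCT over the ($r$-admissible) coefficient fields of Remark \ref{Rem1}, invoke the minimal cell structure of \cite[Proposition 4C.1]{Ha}, and read off the product of the generalized Hopf invariants by comparing $x_1^k=\bigl(h_2(\eta_2)\cdots h_k(\eta_k)\bigr)x_k$ with the relation $u^k=q_1^{f_1}\cdots q_{\gamma(k)}^{f_{\gamma(k)}}w$ defining $\mathcal{Q}_{u^k}$. The case-(b) ``bookkeeping'' you flag as the remaining obstacle is precisely what the paper does, and at the same informal level, by combining Lemma \ref{Lem1} with the splitting $H^{jr}(X,\mathbb{K})\cong Hom(H_{jr}(X,\mathbb{Z}),\mathbb{K})\oplus Ext(H_{jr-1}(X,\mathbb{Z}),\mathbb{K})$ to place the generator/relator pairs $\tilde e_i^{jr}$ or $\tilde e_i^{jr-1}$ with the stated orders $p_i^{\alpha'_i}$ and $p_i^{\alpha_i}$.
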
  
\begin{proof}
We note at the beginning that, by Lemma \ref{Lem1}, every $r$-admissible field is indeed in $\mathcal{P}_{(i)}$ or else in $\mathcal{P}_{(ii)}$. This explains notations in the statement of the theorem. 
	\begin{itemize}
		\item[(a)] Assume that $o(u^k)$ is infinite.  By hypothesis on the cohomology of $X$ the UCT 
		 %for any choice of $r$-admissible field we have $\dim\tilde{H}^*(X, \mathbb{K})=k$. 
	%	This is, in particular, 
		%the case for $\mathbb{K}=\mathbb{Q}$. Thus,  applying the UCT for cohomology 
		with $\mathbb{Q}$-coefficients, condition (1) and  Remark \ref{Rem1} imply that $H_{ir}(X, \mathbb{Z})\cong \mathbb{Z}$ for each $0\leq i\leq k$. 
	%	That is $X$ contains cells:   $e^0,\; e^r,\; \ldots ,\; e^{kr}$. 
		Therefor $X$ is homotopy equivalent 
				to the CW-complex
				$K= \mathbb{S}^r\cup _{\eta _2}e^{2r}\cup   \ldots  \cup _{\eta _k} e^{kr}$
				whose attaching maps satisfy the condition
				$h_2(\eta _2)h_3(\eta _3)\ldots h_k(\eta _k) = q_{1}^{f_1} q_{2}^{f_2} \ldots  q_{{\gamma (k)}}^{f_{\gamma (k)}}$
				where $q_i\in \mathcal{Q}_{u^k}$.
		\item[(b)] Assume that $o(u^k)$ is finite  and let  $l\geq 2$ be the greatest power of $u$ with infinite order.
	%	and $\dim \tilde{H}(X, \mathbb{K})=k$ for all fields $\mathbb{K}$. 
		First of all, since the powers $u^i, \; 1\leq i\leq l$ are in the free part of $\tilde{H}^*(X, \mathbb{Z})$, we should introduce cells $e^0,\; e^r,\; \ldots ,\; e^{lr}$ in the minimal structure of $X$. These form $L= \mathbb{S}^r\cup _{\eta _2}e^{2r}\cup   \ldots  \cup _{\eta _l} e^{kl}$ 
				such that  
				$h_2(\eta _2)h_3(\eta _3)\ldots h_l(\eta _l) = q_{i_1}^{g_{i_1}} q_{i_2}^{g_{i_2}} \ldots  q_{i_{\gamma(l)}}^{g_{i_{\gamma (l)}}}$		with $q_{i_j}\in \mathcal{Q}_{u^{l}}$ and $0\leq g_{i_{j}}\leq f_{i_{j}}$. 
				
		Now,  by Lemma \ref{Lem1}, in order to realize hypothesis on the cohomology of $X$,     the cells
		   $e^{{(l+1)}r},\; \ldots ,\; e^{kr}$ should  be introduced for every   $p=char(\mathbb{K})\in \mathcal{P}_{(ii)}=\mathcal{P}\backslash (\mathcal{P}_{u^k}\cup \mathcal{Q}_{u^{l}})$.  
		Hence,  in order  to rich the (full dimension) $\dim\tilde{H}^*(X, \mathbb{K})=k$, the UCT:  
		$$H^{ir}(X, \mathbb{K})\cong Hom(H_{ir}(X, \mathbb{Z}) , \mathbb{K})\oplus Ext(H_{ir-1}(X, \mathbb{Z}), \mathbb{K})$$ imposes to add (for each $p_i\in \mathcal{P}_{(ii)}$):\\
		- $k-l-1$-generator cells $e^{{jr}}_i$ or else $e^{{jr-1}}_i$ ($l+1\leq j\leq k$) and its corresponding relater-cells $e^{{jr+1}}_i$ or else $e^{{jr}}_i$ satisfying $\delta (e^{{jr+1}}_i) = {p_i}^{\alpha'_i}e^{jr}_i$ or else $\delta (e^{{jr}}_i) = {p_i}^{\alpha'_i}e^{jr-1}_i$ with $\alpha '\geq \alpha$.\\
		-  a cell of degree $rk$ which should came from the Ext term, that is from a pair  ($e^{rk},\; e^{rk-1}$) such that $\delta (e^{{jr}}_i) = {p_i}^{\alpha_i}e^{jr-1}_i$. We then obtain $L'$ as it is stated in the theorem.
		
	%	Finally, for any $q\in \mathcal{Q}_{u^{l-1}}$, using the same argument as in the previous case, we should add:\\
	%	-   $k-d_{q}-1$ cells coming from pairs ($e^{{rj+1}}_q,\; e^{rj}_q$) such that $\delta (e^{{rj+1}}_q) = {q}^{g_q}e^{rj}_q$ ($d_q+1\leq j\leq k$ and  $g_q\geq 1$).\\
	%	- one of degree $rk$ which should came from the Ext term, that is from  the pair ($e^{rk},\; e^{rk-1}$) such that $\delta (e^{{rk}}_q) = {q}^{g_q}e^{rk-1}_q$. Furthermore,  whenever $q=q_i\in \mathcal{Q}_{u^{l}}$ the order $g_q=f'_i$ and  $g_q=f_i$	
		%Again, by the UCT argument, we prove that these cells are the only ones.
		%\qquad  \qquad \qquad $\Box$  
	\end{itemize}
\end{proof}
\section{Examples and a final remark}
To have an idea on sequences of prime numbers introduced in  Proposition \ref{Prop1}, we give in the  table below (\textbf{Table 1}) all prime factors of $\lambda _{(3, k)}$ for $2\leq k \leq 40$. This in possible thanks to the
{\bf  Scientific Workplace 5.5 software}:

%\newpage
	\begin{table}[ht]
			\caption{List of prime factors of $\lambda _{(3, k)}$ up to $40$}
		\begin{tabular}{|c|c|}
			\hline
			k & prime factors of $\lambda_{(3,k)}$\\
			\hline
			2 & 2, 3 \\
			\hline
			4 & 2, 3, 5\\
			\hline
			6 & 2, 3, 5, 7\\
			\hline
			8 & 2, 3, 5, 7, 11\\
			\hline 
			10 & 2, 3, 7, 11, 13\\
			\hline 
			12 & 2, 3, 7, 11, 13, 17 \\
			\hline 
			14 & 2, 3, 5, 11, 13, 17, 19\\
			\hline 
			16 & 2, 3, 5, 11, 13, 17, 19, 23\\
			\hline
			18 & 2, 3, 5, 11, 13, 17, 19, 23\\
			\hline
			20 & 2, 3, 5, 7, 11, 13, 17, 19, 23, 29 \\
			\hline
			22 & 2, 3, 5, 7, 13, 17, 19, 23, 29, 31\\
			\hline
			24 & 2, 3, 5, 7, 13, 17, 19, 23, 29, 31\\
			\hline
			26 & 2, 3, 5, 7, 17, 19, 23, 29, 31, 37\\
			\hline
			28 & 2, 3, 5, 17, 19, 23, 29, 31, 37, 41\\
			\hline
			30 & 2, 3, 5, 11, 17, 19, 23, 29, 31, 37, 41, 43\\
			\hline
			32 & 2, 3, 5, 7, 11, 17, 19, 23, 29, 31, 37, 41, 43, 47\\
			\hline
			34 & 2, 3, 5, 7, 11, 19, 23, 29, 31, 37, 41, 43, 47\\
			\hline
			36 & 2, 3, 5, 7, 11, 13, 19, 23, 29, 31, 37, 41, 43, 47, 53\\
			\hline
			38 & 2, 3, 5, 7, 11, 13, 23, 29, 31, 37, 41, 43, 47, 53\\
			\hline
			40 & 2, 3, 5, 7, 11, 13, 23, 29, 31, 37, 41, 43, 47, 53, 59\\
			\hline
			
		\end{tabular} \\

\end{table}
\subsection{Examples}
The following theorem that we recall in our notations ($h_i^r$ is denoted above by $h_i$!) is paramount to get examples of CW-complexes of the form $K= \mathbb{S}^r\cup _{\beta _2}e^{2r}\cup _{\beta _3}   \ldots  \cup _{\beta _k} e^{kr}$ satisfying the conjecture (\ref{ratconj}):
%The following theorem  which we recall  in terms of our notations  ($h_i^r$ is denoted above by $h_i$!) is due to Baues  and  paramount to get examples of CW-complexes satisfying  (\ref{ratconj}):
% The following theorem  which we recall  in terms of our notations  ($h_i^r$ is denoted above by $h_i$!) is due to Baues  and  paramount to get examples of CW-complexes satisfying  (\ref{ratconj}):
\begin{theorem}\label{Thm4} \cite[Thorem 1]{Bau}
	$$Im(h_i^r)=\Bigg\{\begin{array}{lr}
	\mathbb{Z}, & \hbox{if} \;  r=2,\; 4,\, 8 \; \hbox{and}\; i=2\\
	\mathbb{Z}, &  \hbox{if} \; r=2\; \hbox{and i a prime number}\\
	i\mathbb{Z}, & \hbox{otherwise}
	\end{array}   
	\Bigg.$$
\end{theorem}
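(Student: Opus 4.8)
The plan is to compute the image of the generalized Hopf invariant by using that $h_i^r\colon\pi_{ir-1}(K')\to\mathbb Z$ is a group homomorphism, so that $\operatorname{Im}(h_i^r)=d_i\mathbb Z$ for a unique integer $d_i\ge 0$, and then to pin down $d_i$. First I would take $K'$ to be the James reduced product $J_{i-1}(\mathbb S^r)$, i.e.\ the $(i-1)r$-skeleton of $\Omega\mathbb S^{r+1}$, and compute $\pi_{ir-1}(K')$ through the James fibration and the $EHP$-sequence. The divisibility $d_i\mid i$ then comes from a universal example: the integral cohomology $H^*(\Omega\mathbb S^{r+1};\mathbb Z)\cong\Gamma[x]$ is a divided power algebra on a class $x$ in degree $r$, in which $x_1x_{i-1}=\binom{i}{1}x_i=i\,x_i$, with $x_j$ the $j$-th divided power; hence the attaching map of the top cell of $J_i(\mathbb S^r)$ is an element of $\pi_{ir-1}(K')$ whose generalized Hopf invariant is exactly $i$, so $i\in\operatorname{Im}(h_i^r)$.

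Next I would realize the two exceptional values $d_i=1$. For $i=2$, $h_2^r$ is the ordinary Hopf invariant on $\pi_{2r-1}(\mathbb S^r)$, and for $r=2,4,8$ the Hopf construction applied to the multiplications of $\mathbb C$, $\mathbb H$, $\mathbb O$ produces the classical Hopf maps $\eta,\nu,\sigma$ of Hopf invariant one, so $d_2=1$ there. For $r=2$ and $i=p$ prime I would exploit that $j!$ is a unit mod $p$ for $j<p$, so that divided powers agree with ordinary powers below degree $2p$ modulo $p$ and $J_{p-1}(\mathbb S^2)_{(p)}\simeq\mathbb C P^{p-1}_{(p)}$; combining the $p$-local attaching map of the top cell of $\mathbb C P^{p}$ with an arithmetic-fracture (pullback-square) assembly over the remaining primes and over $\mathbb Q$ then yields an element of $\pi_{2p-1}(K')$ with $h_p^2=1$, hence $d_p=1$ when $r=2$.

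It then remains to show $\operatorname{Im}(h_i^r)\subseteq i\mathbb Z$ in all other cases. Assuming $d_i<i$, I would choose a prime $p$ with $v_p(d_i)<v_p(i)$ and an element $\beta$ with $v_p(h_i^r(\beta))<v_p(i)$, form $Y=K'\cup_\beta e^{ir}$, and analyse the action of the mod $p$ Steenrod algebra on $H^*(Y;\mathbb Z/p)$. The relation $x_1x_{i-1}=h_i^r(\beta)\,x_i$, the Cartan formula, and the Adem relations force the coefficient with which the appropriate lowest admissible operation carries the bottom class to $x_i$ to have $p$-adic valuation at least $v_p(i)$; this is the step where the arithmetic of $i$ is produced combinatorially from the structure of the Steenrod algebra. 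In precisely the residual cases where every primary obstruction vanishes, this argument degenerates to a secondary-operation statement of Adams type, and I would close the gap by invoking the Hopf invariant one theorem (Adams, through secondary cohomology operations, or through Adams operations in $K$-theory, cf.\ \cite{Ha}) for $i=2$, $r\notin\{2,4,8\}$, and its odd primary analogue (Liulevicius, Shimada--Yamanoshita, Toda) for $i=p$ prime and $r>2$. Combined with the previous two steps this gives the asserted value of $\operatorname{Im}(h_i^r)$.

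The hard part will be this last step: the cup-product and Adem-relation bookkeeping readily disposes of composite $i$ and of primes dividing $i$ only to a low power, but the borderline cases are exactly the Hopf-invariant-one situations, so one cannot avoid importing Adams's theorem and its odd primary counterparts; moreover one must track the divisibility coming out of the Steenrod algebra carefully enough to see that its $p$-adic valuation equals $v_p(i)$ on the nose rather than merely being positive.
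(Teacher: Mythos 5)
You should first note that the paper offers no proof of this statement at all: it is quoted verbatim from Baues (\cite[Theorem 1]{Bau}), so there is no internal argument to compare yours against and your proposal has to stand on its own. Its lower-bound half does: for $r$ even $H^*(\Omega\mathbb{S}^{r+1};\mathbb{Z})$ is the divided power algebra, $\gamma_1\gamma_{i-1}=i\gamma_i$, so the attaching map of the top cell of $J_i(\mathbb{S}^r)$ shows $i\in Im(h_i^r)$; the classical Hopf maps give the case $i=2$, $r=2,4,8$; and for $r=2$, $i=p$ your localization step is correct, since the map $J_{p-1}(\mathbb{S}^2)\rightarrow \mathbb{C}P^{\infty}=K(\mathbb{Z},2)$ classifying $\gamma_1$ compresses into $\mathbb{C}P^{p-1}$ and is multiplication by $j!$ on $H^{2j}$, hence a $p$-local equivalence, after which clearing denominators on the attaching map of $\mathbb{C}P^{p}$ produces an integral class with $h_p$ prime to $p$, whence $Im(h_p^2)=\mathbb{Z}$. (Be aware that this compression is special to $r=2$: for $r=4$, $p=3$ the analogous map to $\mathbb{H}P^{\infty}$ does not compress, because $[i_4,i_4]$ has a nontrivial $3$-torsion component; this is exactly why $Im(h_3^4)=3\mathbb{Z}$ even though the ring $\mathbb{F}_3[x]/(x^4)$ is realized by $\mathbb{H}P^3$. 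So any upper-bound argument must use the specific space $J_{i-1}(\mathbb{S}^r)$, not merely abstract non-realizability of truncated polynomial algebras; your write-up is ambiguous on this point. Also, the divided-power computation presumes $r$ even, which is the implicit scope of the statement.)

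The genuine gap is in the final step, precisely where you assert that ``cup-product and Adem-relation bookkeeping readily disposes of composite $i$''. Primary mod $p$ operations together with the ring structure can force divisibility of $h_i^r(\beta)$ by $p$, but never by $p^{v_p(i)}$ when $v_p(i)\geq 2$, so this part of the plan cannot deliver the clause $Im(h_i^r)=i\mathbb{Z}$ on the nose. Concretely, for $r=2$, $i=4$: in $Y=J_3(\mathbb{S}^2)\cup_{\beta}e^{8}$ the relations $\gamma_1^2=2\gamma_2$, $\gamma_1\gamma_2=3\gamma_3$ give $3m=2m'$ (where $m=h_4(\beta)$ and $\gamma_2^2=m'\gamma_4$), hence only $2\mid m$; the remaining possibility $m\equiv 2\ (\mathrm{mod}\ 4)$ translates mod $2$ into ``$Sq^4$ acts nontrivially from $H^4$ to $H^8$'', which is not a contradiction at the level of primary operations (the map $\nu$ realizes $Sq^4$). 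Excluding it, i.e. proving $Im(h_4^2)=4\mathbb{Z}$, already requires secondary operations, $K$-theoretic Adams-operation/$e$-invariant arguments, or Toda-type EHP and James-number results -- the very kind of input you only budget for the borderline Hopf-invariant-one cases $i=2$ and $i=p$. So as written the proposal proves $d_i\mid i$ and the two exceptional equalities, but not the containment $Im(h_i^r)\subseteq i\mathbb{Z}$ for $i$ divisible by a prime square (nor, in general, the exact $p$-adic valuation $v_p(i)$); you would need to add the $K$-theoretic or EHP-theoretic divisibility argument that Baues's proof in effect supplies.
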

\begin{example}
\begin{enumerate}
\item For $r=2$ and $k=2$, the space $K=\mathbb{S}^2\cup_{\beta _2} e^{4}$, where $\beta _2 : \mathbb{S}^3\rightarrow \mathbb{S}^2$ is  the Hopf map, that is such that   $h_2(\beta _2) = 1$, clearly satisfies Proposition \ref{Prop1} for any  field of characteristic $p \notin \{ 2,3 \}$. 
\item For $r=2$ and $k=3$, let $\beta_3 =[i_2, [i_2,i_2]]: \mathbb{S}^{5}\rightarrow \mathbb{S}^{2}\cup_{\beta _2} e^{4}$ satisfying $h_3(\beta_3) = 3$. The space $$K=\mathbb{S}^2\cup_{\beta _2} e^{4}\cup_{\beta_3} e^{6}$$ satisfies hypothesis of Proposition \ref{Prop1} for any  field of characteristic $p \notin \{ 2,3 \}$. 
	\item  For  $r = 4$ and $k = 3$, let $\beta _2 : \mathbb{S}^7\rightarrow \mathbb{S}^4$ be the Hopf map, that is such that   $h_2(\beta _2) = 1$, and  $\beta_3 =[i_4, [i_4,i_4]]: \mathbb{S}^{11}\rightarrow \mathbb{S}^{4}\cup_{\beta _2} e^{8}$ satisfying $h_3(\beta_3) = 3$. The space $$K=\mathbb{S}^4\cup_{\beta _2} e^{8}\cup_{\beta_3} e^{12}$$ satisfies hypothesis of Proposition \ref{Prop1} for any  field of characteristic $p \notin \{ 2,3 \}$. 
	\item  For  $r = 4$ and $k = 4$, in addition to $\beta _2$ and $\beta _3$ of the above item, consider $\beta_4 : \mathbb{S}^{15}\rightarrow \mathbb{S}^4\cup_{\beta _2} e^{8}\cup_{\beta_3} e^{12}$, provided by Theorem \ref{Thm4}, such that
	$h_4(\beta_4) = 4$. Hence, the space $$K=\mathbb{S}^4\cup_{\beta _2} e^{8}\cup_{\beta_3} e^{12} \cup_{\beta_4} e^{16}$$ satisfies hypothesis of {Proposition \ref{Prop1}.} for any  field of characteristic $p \notin \{ 2,3,5 \}$.
	\item For  $r = 8$ and $k = 7$, let $\beta_2 : \mathbb{S}^{15} \rightarrow S^8$ be the Hopf map, that is with     $h_2(\beta _2) = 1$ and consider the maps $\beta_3 : \mathbb{S}^{23}  \longrightarrow S^8 \cup_2 e^{16}$, $ \beta_4: \mathbb{S}^{31}  \rightarrow \mathbb{S}^8 \cup_2 e^{16} \cup_3 e^{24}$, $ \beta_5 : \mathbb{S}^{39}  \rightarrow \mathbb{S}^8 \cup_2 e^{16} \cup_3 e^{24} \cup_{4} e^{32}$, $ \beta_6 : \mathbb{S}^{47}  \rightarrow \mathbb{S}^8 \cup_2 e^{16} \cup_3 e^{24} \cup_{4} e^{32} \cup_5 e^{40}$ and $ \beta_7 : \mathbb{S}^{55}  \rightarrow \mathbb{S}^8 \cup_2 e^{16} \cup_3 e^{24} \cup_{4} e^{32} \cup_5 e^{40} \cup_6 e^{48}$, provided by Theorem \ref{Thm4}, such that  $h_3(\beta_3) = 3$, $h_4(\beta_4) = 4$, $h_5(\beta_5) = 5$, $h_6(\beta_6) = 6$ and $h_7(\beta_7) = 7$. Hence, the space $$K=\mathbb{S}^8\cup_{\beta _2} e^{16}\cup_{\beta_3} e^{24} \cup_{\beta_4} e^{32} \cup_{\beta_5} e^{40} \cup_{\beta_6} e^{48} \cup_{\beta_7} e^{56}$$ satisfies hypothesis of {Proposition \ref{Prop1}} for any  field of characteristic $p \notin \{ 2,3,5,7 \}$.
	\item  For $r = 8$ and $k = 8$ we consider  $\beta _2$, $\beta_3 $, $\beta_4 $, $\beta_5$, $\beta_6$,  $\beta_7$ given in the previous item and  $\beta_8 : \mathbb{S}^{63} \rightarrow \mathbb{S}^8\cup_{\beta _2} e^{16}\cup_{\beta_3} e^{24} \cup_{\beta_4} e^{32} \cup_{\beta_5} e^{40} \cup_{\beta_6} e^{48} \cup_{\beta_7} e^{56}$ such that $h_8(\beta_8) = 8$. Hence, the space $$K=\mathbb{S}^8\cup_{\beta _2} e^{16}\cup_{\beta_3} e^{24} \cup_{\beta_4} e^{32} \cup_{\beta_5} e^{40} \cup_{\beta_6} e^{48} \cup_{\beta_7} e^{56}  \cup_{\beta_8} e^{64} $$ satisfies hypothesis of {Proposition \ref{Prop1}} for any  field of characteristic $p \notin \{ 2,3,5,7,11 \}$.
\item
The above examples 	correspond to cases where $r= 2, 4, 8$.
	Now, by Theorem \ref{Thm4} when $r \not = 2,\; 4, \; 8$ we obtain a CW-complex $K$ such that the product of its generalized Hopf invariants is  $h_2(\beta _2)\ldots h_k(\beta _k)=k!$. In order to illustrates the impact of $\lambda_{(3,k)}$ in the process, we finish by fixing $r=6$ and taking some arbitrary values  of $k$. The table below dresses  fields characteristics to exclude when  $$K=\mathbb{S}^6\cup_{\beta _2} e^{12}\cup_{\beta_3} \ldots \cup_{\beta_k} e^{6k}:$$
	
%	\newpage
	
		\begin{table}[ht]
	%	\begin{center}
						\caption{Finite characteristic $p$ not allowed, for some values of $k$ ($r=6$)} 
			\begin{tabular}{|c|c|}
				\hline 
				k & finite characteristic $p$ not allowed \\
				\hline
				4 & 2, 3, 5 \\
				\hline 
				5 &  2, 3, 5\\
				\hline
				16 & 2, 3, 5, 7, 11, 13, 17, 19, 23\\
				\hline 
				18 &  2, 3, 5, 7, 11, 13, 17, 19, 23\\
				\hline 
				20 &  2, 3, 5, 7, 11, 13, 17, 19, 23, 29\\
				\hline 
				22 &  2, 3, 5, 7, 11, 13, 17, 19, 23, 29, 31\\
				\hline 
			\end{tabular}\\

%		\end{center}
		
	\end{table}
	
\end{enumerate}

\end{example}

\subsection{Final remark}
	
	In this paper, we have treated  the case where $u\in \tilde{H}^r(X, \mathbb{Z})$ in condition (1) is the unique spherical cohomology class (thus of infinite order). 
	 %while there exist spaces with first cohomology generating class having a finite order e.g. $\mathbb{R}P^n$.
	  To achieve our approach in studying the rationality conjecture for finite CW-complexes $X$ satisfying the relations $zcl_n(X)=TC_n(X)\; \forall n\geq 2$,  it remains, on the one hand,  to deal with the case where $u\in \tilde{H}^r(X, \mathbb{Z})$ is of finite order (e.g. $\mathbb{R}P^n$) and, on the other hand, to consider more than one generating cohomology class in $\tilde{H}^r(X, \mathbb{Z})$ spherical or not. This is our main project for the future.
	
%\subsection{Final remark}

\end{document}